\documentclass[fleqn,reqno,11pt]{article}

\usepackage[hmargin=1.25in, vmargin=1.25in, marginparwidth=1in, marginparsep=0.1in, a4paper, centering]{geometry}

\usepackage{amsmath, amsthm, amssymb, amsfonts}
\usepackage{mathtools} 
\usepackage[english]{babel}
\usepackage{cite}
\usepackage{fourier}
\usepackage[lining]{ebgaramond}
\usepackage{nicefrac}
\usepackage{enumerate}
\usepackage{pifont}

\usepackage{booktabs} 
\usepackage{texlogos} 

\usepackage{amsfonts}

\usepackage{upgreek}

\usepackage{xcolor}
\usepackage{graphicx}
\usepackage{comment} 
\usepackage{paralist} 
\usepackage{tikz}  
\usetikzlibrary{decorations.pathreplacing}

\usepackage[hang,flushmargin]{footmisc}

\usepackage[colorlinks=true,urlcolor=blue, citecolor=blue,linkcolor=blue,linktocpage,pdfpagelabels, bookmarksnumbered,bookmarksopen]{hyperref}
\usepackage[hyperpageref]{backref} 

\numberwithin{equation}{section}

\newtheorem{theorem}{Theorem}
\newtheorem*{theorem1}{Theorem}
\newtheorem{lemma}{Lemma}
\newtheorem{cor}{Corollary}
\newtheorem{prop}{Proposition}

\theoremstyle{definition}
\newtheorem{remark}{Remark}
\theoremstyle{remark}

\newcommand{\ds}{\displaystyle}

\newcommand{\R}{\mathbb{R}}

\newcommand{\de}{\partial}
\newcommand{\eps}{\varepsilon}

\def\XXint#1#2#3{{\setbox0=\hbox{$#1{#2#3}{\int}$}
   \vcenter{\hbox{$#2#3$}}\kern-.5\wd0}}

\renewcommand\footnotemark{}

\DeclareMathOperator{\dive}{div}

\title{Maximization of the efficiency of the first Dirichlet eigenfunction and improved eigenvalue inequalities}

\author{F. Della Pietra%
\thanks{\hspace*{-1.8em}Dipartimento di Matematica e Applicazioni ``R. Caccioppoli'', Universit\`a degli studi di Napoli Federico II, Via Cintia, Complesso Universitario Monte S. Angelo, 80126 Napoli, Italy. \href{mailto:f.dellapietra@unina.it}{\nolinkurl{f.dellapietra@unina.it}}}
}

\begin{document}
\maketitle

\begin{abstract}
We study the efficiency of the first Dirichlet eigenfunction $u$ on bounded convex domains $\Omega \subset \mathbb{R}^N$, defined as the ratio between the mean value of $u$ on $\Omega$ and its maximum value. By exploiting improved log-concavity estimates, we establish new sharp lower bounds for the first eigenvalue $\lambda_1$ and upper bounds for the efficiency in terms of the geometry of the domain, refining classical inequalities by Payne, Stakgold, and Hersch. Furthermore, we investigate the asymptotic behavior of the efficiency for elongating planar convex domains, making use of 1D limit profiles and Schr{\"o}dinger operators with convex potentials. As a main consequence of our analysis, we prove that among all planar convex domains the Payne-Stakgold upper bound is not optimal, and that there exists a maximizer of the efficiency.

\medskip\noindent
\textbf{Keywords:} First Dirichlet eigenvalue, convex domains, shape optimization, efficiency functional, log-concavity.

\medskip\noindent
\textbf{MSC (2020):} 35P15, 49Q10.
\end{abstract}

\section{Introduction}
\label{sec:introduction}
Let $\Omega \subset \mathbb{R}^N$ ($N \ge 2$) be an open, bounded, convex set. Let $u \in H^1_0(\Omega)$ be the positive first Dirichlet eigenfunction on $\Omega$, with $\max_{\Omega} u=1$, which satisfies:
\begin{equation*} \label{eq:dirichlet}
    \begin{cases}
        \Delta u + \lambda_1(\Omega) u = 0 & \text{in } \Omega, \\
        u = 0 & \text{on } \partial\Omega,
    \end{cases}
\end{equation*}
where the first eigenvalue $\lambda_1(\Omega)$ is characterized by the Rayleigh quotient:
\begin{equation*}
    \lambda_1(\Omega) = \min_{\varphi\in H_0^1(\Omega)} \frac{\displaystyle \int_\Omega |\nabla \varphi|^2\,dx}{\displaystyle \int_\Omega \varphi^2\, dx}.
\end{equation*}

A cornerstone of shape optimization in geometric spectral theory is to establish sharp a priori estimates connecting $\lambda_1(\Omega)$ with the geometry of $\Omega$. One of the most celebrated results in this direction is the Faber-Krahn inequality, which states that among all domains of a given Lebesgue measure, the ball minimizes the first Dirichlet eigenvalue. However, when restricting the analysis to the class of convex domains, other  geometric parameters such as the inradius $R_{\Omega}$, the diameter $D_{\Omega}$ or the perimeter come into play.

A seminal result in this direction was obtained by Hersch \cite{hersch1960}, who found a sharp lower bound in terms of the inradius of a planar convex set:
\begin{equation} \label{eq:hersch}
    \lambda_1(\Omega) \ge \frac{\pi^2}{4}\frac{1}{R_\Omega^2}.
\end{equation}
Its validity in any dimension was proven by Protter \cite{protter_1981}.
The inequality is asymptotically attained as $\Omega$ tends to an infinite slab.

Another well-known lower bound for $\lambda_{1}(\Omega)$ under convexity assumptions was proven by Payne \cite{payne1980}: 
\begin{equation} \label{eq:payne_torsion}
    \lambda_1(\Omega) M_\Omega \ge \frac{\pi^2}{8}
\end{equation}
where $M_\Omega$ is the maximum of the torsion function $v_{\Omega}\in H_{0}^{1}(\Omega)$ such that $-\Delta v_{\Omega}=1$. Again, \eqref{eq:payne_torsion} is sharp and it is asymptotically attained as $\Omega$ degenerates into an infinite slab.

A third, now-classical inequality was found by Payne and Stakgold \cite{payne_stakgold1973} in 1973, relating to the average-to-peak ratio of the fundamental mode. Precisely, they showed that, if $u$ is the first positive eigenfunction of $-\Delta$ with $\max_{\Omega} u=1$ and $\Omega$ convex, then
\begin{equation}
\label{effintro}
E(\Omega) := \frac{1}{|\Omega|} \int_\Omega u \,dx \le \frac{2}{\pi};
\end{equation}
 the mean-to-peak ratio $E(\Omega)$ is also called the {\it efficiency}  of $\Omega$.
   It follows from the translation and scaling properties of the volume measure and eigenfunctions that 
\[
E(x_{0}+\alpha \Omega)=E(\Omega)
\]
for any $x_{0}\in \R^{N}$ and any $\alpha>0$.

Here, the optimality of \eqref{effintro} is a subtle question. For $N=1$, $E(\Omega)=2/\pi$. In the planar case, for any rectangle $R$, the efficiency is
\[
E(R)=\frac{4}{\pi^{2}}.
\]
On the other hand, as affirmed in \cite{payne_stakgold1973}, the constant $2/\pi$ is attained in two dimensions as the limit of a sequence of thinning annuli (see \cite{vdBDPDBG} for a proof); furthermore in \cite{vdBBK} it is shown that the trivial bound is reached: $\sup E(\Omega)=1$ among all connected domains of $\mathbb R^{N}$.
Finally, also assuming convexity, $E(\Omega)$ is not bounded from below by any positive constant. In two dimensions, examples of vanishing efficiency are given by sequences of elongating ellipses, triangles, rhombi or circular sectors; more generally, for domains where the corresponding eigenfunctions ``localise'' \cite{vdBDPDBG} (see also \cite{vdBBK,vdbB} and the references therein).

Several proofs of \eqref{eq:hersch}--\eqref{effintro} are available, making use of different techniques. One of these is based on the so called $P$-function method, introduced by Payne: a suitable function $P=P(u,\nabla u)$ satisfies an elliptic inequality and then the maximum principle; this allows to get a precise estimate of $\nabla u$. We refer to the classical monograph \cite{sperb} for an extensive overview of this method.

Refinements of \eqref{eq:hersch}--\eqref{effintro} are known in several directions. One natural question concerns the optimal class of domains for which such inequalities are valid: although they are known to hold for mean convex domains (where the mean curvature of the boundary is non-negative, see e.g. \cite{dpdbg}), it is of great interest to understand whether these estimates can be extended to broader geometric classes, at the cost of different sharp constants. For example, as regards \eqref{eq:hersch}, history and recent results on this subject can be found in \cite{bozzola_brasco}.

A parallel effort has been devoted to quantitatively sharpening the estimates themselves. To better capture the elongation and the asymmetry of the domain -- features that the inradius $R$ alone cannot fully capture on its own -- researchers have successfully enhanced these classical inequalities by including further geometric invariants, such as the diameter $D_\Omega$ or boundary curvature terms.

Regarding the Hersch-Protter inequality, Protter himself \cite{protter_1981} showed that
\begin{equation} \label{eq:hersch_protter2}
    \lambda_1(\Omega) \ge \frac{\pi^2}{4}\left(\frac{1}{R_\Omega^2}+\frac{N-1}{D_\Omega^{2}}\right).
\end{equation}
In \cite{mendez} it was proved that for convex planar domains
\begin{equation} \label{eq:hersch_hernandez}
    \lambda_1(\Omega) \ge \frac{\pi^2}{4}
    \left(\frac{1}{R_\Omega^2}+\frac{1}{(D_\Omega - R_\Omega)^{2}}\right),
\end{equation}
sharpening \eqref{eq:hersch_protter2} when $N=2$.

In the context of curvature-dependent refinements, some notable results were obtained by Payne and Philippin \cite{payne_philippin}. Using the log-concavity properties of the eigenfunctions \cite{acker}, they sharpened the classical gradient estimates into tighter bounds depending on $q_{min} = \min_{\partial \Omega} |\nabla u|$, thereby obtaining
\begin{equation*} \label{eq:payne_philippin_lambda}
    \lambda_1(\Omega) \ge \frac{\pi^2}{4}\frac{1}{R_\Omega^2} +\frac{q_{\text{min}}^2}{4}.
\end{equation*}
A similar technique was applied to the torsion function, yielding refined lower bounds for \eqref{eq:payne_torsion}, and could be extended to \eqref{effintro}. 

However, as explicitly noted by the authors themselves, making these curvature-based estimates fully explicit and applicable requires a strictly positive lower bound for $q_{\text{min}}$. 
Such lower boundary gradient estimates are difficult to obtain, especially for domains with high eccentricity where the gradient can become arbitrarily small. This intrinsic difficulty further motivates the search for sharp estimates that capture the domain's elongation relying exclusively on computable geometric invariants such as the diameter $D_\Omega$.

\medskip

The purpose of the present paper is then twofold: 
\begin{itemize}
\item to refine inequalities \eqref{eq:hersch}--\eqref{effintro} in the context of convex sets, introducing diameter-based remainder terms;
\item to show that in the planar convex case the efficiency inequality \eqref{effintro} is not optimal, clarifying the supremum of $E(\Omega)$ among planar convex domains.
\end{itemize}
 Specifically, our main results read as follows.

\begin{theorem}
\label{thm:herschbest}
Let $\Omega\subset \R^{N}$ be an open, bounded, convex set. Then
\begin{equation}
\label{herschbest}
    \lambda_1(\Omega) \ge \frac{\pi^2}{4}\left(\frac{1}{R_\Omega^2}+\frac{4(N-1)}{D_\Omega^{2}}\right).
\end{equation}
\end{theorem}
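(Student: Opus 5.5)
We plan to follow Protter's approach to \eqref{eq:hersch_protter2}, replacing the standard log-concavity of $u$ with a quantitative ("improved") log-concavity estimate that carries a diameter term. The starting point is a $P$-function argument. Set $w=-\log u$, which is convex on $\Omega$ by Brascamp--Lieb/Acker. It satisfies
\[
\Delta w=|\nabla w|^2+\lambda_1 .
\]

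\textbf{Improved log-concavity.} First we establish a strengthened concavity estimate of the form
\[
D^2 w \ge \frac{\pi^2}{D_\Omega^2}\,\mathrm{Id}
\]
in a suitable sense, or an equivalent statement along lines. The natural route is a two-point maximum principle argument (à la Andrews--Clutterbuck) comparing $\langle \nabla w(y)-\nabla w(x), (y-x)/|y-x|\rangle$ with the one-dimensional modulus. The model is $\psi(s)=-\log\cos(\pi s/D)$ on $(-D/2,D/2)$, which is the log of the first eigenfunction of an interval of length $D_\Omega$.

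This yields $\Delta w \ge (N-1)\,\mu+\partial_{\nu\nu}w$ in directions orthogonal to $\nabla u$, with $\mu$ of order $\pi^2/D_\Omega^2$. The factor $4$ in \eqref{herschbest}, compared with Protter's $1$, suggests we should aim for
\[
\mu \ge \frac{\pi^2}{D_\Omega^2},
\]
that is, the sharp log-concavity modulus corresponding to a segment of length $D_\Omega$, rather than $\pi^2/(4D_\Omega^2)$.

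\textbf{$P$-function step.} Next we consider
\[
P=|\nabla u|^2+\Lambda u^2, \qquad \Lambda=\lambda_1-(N-1)\mu .
\]
Using the eigenfunction equation together with the Hessian bound on the tangential directions of the level sets of $u$, we show that $P$ attains its maximum where $\nabla u=0$, that is, at the maximum point of $u$. This is done through the usual computation of $\Delta P$ modulo $\nabla P$, where the convexity of the level sets and the improved bound supply the $-(N-1)\mu$ correction. It gives
\[
|\nabla u|^2\le \Lambda\,(1-u^2).
\]

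\textbf{Integration along a ray.} Finally we integrate $|\nabla u|/\sqrt{1-u^2}\le\sqrt\Lambda$ along a segment from a maximum point $x_0$ to the nearest boundary point, whose length is at most $R_\Omega$ by convexity. This gives
\[
\frac{\pi}{2}\le \sqrt{\Lambda}\,R_\Omega ,
\]
hence
\[
\lambda_1\ge \frac{\pi^2}{4R_\Omega^2}+(N-1)\mu ,
\]
and \eqref{herschbest} follows with $\mu=\pi^2/D_\Omega^2$.

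\textbf{Main obstacle.} We expect the difficulty to lie in the first step: obtaining the sharp improved log-concavity with exactly the constant $\pi^2/D_\Omega^2$, and making it usable pointwise in the tangential directions rather than only in the two-point form. We would first try the two-point maximum principle with comparison function $\psi$, using the convexity of $\Omega$ to handle boundary behaviour, since $w\to+\infty$ at $\partial\Omega$. We would then pass to the limit $y\to x$ to extract the Hessian bound. If the pointwise bound fails near the boundary, we would instead run the $P$-function argument in its two-point form directly.
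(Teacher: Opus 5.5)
Your architecture is the paper's. Improved log-concavity gives the tangential bound \eqref{eq:AC_bound}. That yields $|\nabla u|^2\le\Lambda(1-u^2)$ with $\Lambda=\lambda_1-(N-1)\mu$ and $\mu=\pi^2/D_\Omega^2$. Integrating $\arcsin u$ from $x_0$ to a nearest boundary point then finishes. The improved log-concavity is exactly Andrews--Clutterbuck's Theorem 1.5, so it can be quoted and is not the real obstacle. The genuine gap is in the $P$-function step. The mechanism you propose, the usual computation of $\Delta P$ modulo $\nabla P$ showing that $P$ is a subsolution, fails once the constant is lowered from $\lambda_1$ to $\Lambda$.

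Indeed, $\Delta P=2|\nabla^2u|^2-2(N-1)\mu|\nabla u|^2-2\Lambda\lambda_1u^2$. Take a point where $\nabla P=0$ and $\nabla u\neq0$, and set $e=\nabla u/|\nabla u|$. Then $\nabla^2u\,e=-\Lambda u\,e$, so the restriction of $\nabla^2u$ to $e^\perp$ has trace $\Delta u-u_{ee}=-(N-1)\mu u$. Improved log-concavity makes this block $\le-\mu u\,\mathrm{Id}$, so it equals $-\mu u\,\mathrm{Id}$, and therefore
\[
\Delta P=-2(N-1)\mu\bigl(|\nabla u|^2+(\Lambda-\mu)u^2\bigr)<0
\]
(recall $\lambda_1\ge N\mu$). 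At such points $\Delta P$ is fully determined by the equation and the Hessian bound, and it is strictly negative. So an interior maximum of $P$ with $\nabla u\neq0$ is compatible with all second-order information, and no subsolution inequality can be derived this way.

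The missing idea is a first-order argument with a perturbed constant, as in the paper. Take $P_\eps=|\nabla u|^2+(\Lambda+\eps)u^2$. At an interior critical point with $\nabla u\neq0$ one has $\nabla^2u\nabla u=-(\Lambda+\eps)u\nabla u$. Inserting this and $\Delta u=-\lambda_1u$ into $|\nabla u|H_u=-\Delta u+\langle\nabla^2u\nabla u,\nabla u\rangle/|\nabla u|^2$ gives $|\nabla u|H_u=((N-1)\mu-\eps)u$, contradicting \eqref{eq:AC_bound}. The $\eps$ is essential: for $\eps=0$ you only land on the equality case.

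For the same reason (no elliptic inequality for $P_\eps$), a boundary maximum cannot be ruled out by a Hopf lemma applied to $P$. You need the strict sign $\partial_\nu P_\eps=2u_\nu u_{\nu\nu}=-2H_u u_\nu^2<0$ on $\partial\Omega$. This comes from Hopf's lemma for $u$ together with strict convexity of $\partial\Omega$, obtained by approximating $\Omega$ with smooth strictly convex sets. With these changes the rest of your argument goes through as written.
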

Now let be
  \begin{equation*} \label{eq:gamma_def}
        \delta_\Omega =\frac{(N-1)\pi^2}{\lambda_{1}(\Omega)D_\Omega^{2}-(N-1)\pi^2}>0.
    \end{equation*}
\begin{theorem} \label{thm:paynebest}
    Let $\Omega \subset \mathbb{R}^N$ be an open, bounded, convex  set.
    Then
    \begin{equation*} \label{eq:paynebest}
        \lambda_1(\Omega) M_\Omega \ge \frac{\pi^2}{8} + \delta_\Omega \sum_{k=1}^\infty \frac{2k}{(2k+1)^2(\delta_\Omega+1+2k)}.
    \end{equation*}
\end{theorem}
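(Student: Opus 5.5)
The plan is to follow Payne's comparison argument for \eqref{eq:payne_torsion}, but to feed it an improved gradient bound for $u$ coming from the improved log-concavity available on convex domains. Set
\[
\mu:=\lambda_1(\Omega)-\frac{(N-1)\pi^2}{D_\Omega^2}.
\]
Then $\mu>0$ by Theorem~\ref{thm:herschbest} (indeed $\mu\ge \frac{\pi^2}{4R_\Omega^2}+\frac{(N-1)\pi^2}{D_\Omega^2}$), and
\[
\lambda_1(\Omega)=\mu(1+\delta_\Omega),\qquad \delta_\Omega=\frac{\lambda_1-\mu}{\mu}.
\]

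\textbf{Step 1: improved gradient bound.} I would first show that
\begin{equation*}
|\nabla u|^2\le \mu\,(1-u^2)\quad\text{in }\Omega .
\end{equation*}
This improves Payne's classical bound $|\nabla u|^2\le\lambda_1(1-u^2)$. The tool is a $P$-function $P=|\nabla u|^2+\mu u^2$, as in Sperb's monograph. At an interior critical point of $P$, the usual computation gives a quantity involving $D^2u$. Writing it in terms of $w=\log u$, the sign of that quantity is governed by the Hessian of $w$ in the directions orthogonal to $\nabla u$. The improved log-concavity estimate used for Theorem~\ref{thm:herschbest} says that $w$ is more concave than the log of the one-dimensional eigenfunction of an interval of length $D_\Omega$. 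This should supply a contribution of at least $(N-1)\pi^2/D_\Omega^2$ from these $N-1$ transverse directions, which is exactly what allows $\lambda_1$ to be replaced by $\mu$. Once $P$ is shown to attain its maximum where $\nabla u=0$, i.e.\ at the maximum point of $u$, we get $P\le\mu$, which is the bound above. I expect this step to be the main obstacle. If the interior-maximum argument does not close, I would instead integrate the improved log-concavity along gradient lines of $u$.

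\textbf{Step 2: comparison with the torsion function.} Look for $F$ with $F(0)=0$, $F'\ge0$ and $F''\le0$ on $[0,1]$, and consider $F(u)$. We have
\[
-\Delta F(u)=-F''(u)|\nabla u|^2+\lambda_1uF'(u).
\]
Since $-F''\ge0$, Step 1 gives $-\Delta F(u)\le -\mu(1-u^2)F''(u)+\mu(1+\delta_\Omega)uF'(u)$. So I would choose $F$ solving the ODE
\begin{equation*}
\mu(1-s^2)(-F''(s))+\mu(1+\delta_\Omega)\,sF'(s)=1,\qquad F(0)=0 .
\end{equation*}
The free constant is fixed by the natural regularity condition at $s=1$. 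With this choice $-\Delta F(u)\le 1=-\Delta v_\Omega$, and $F(u)=0=v_\Omega$ on $\partial\Omega$. The maximum principle then gives $v_\Omega\ge F(u)$, hence $M_\Omega\ge F(1)$ and
\[
\lambda_1M_\Omega\ge \mu(1+\delta_\Omega)F(1).
\]
Concavity of $F$ must be checked from the explicit solution. Writing $G=F'$, the ODE is first order and linear in $G$, and the integrating factor $(1-s^2)^{(1+\delta_\Omega)/2}$ makes the sign of $G'$ visible.

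\textbf{Step 3: evaluating $F(1)$.} Substitute $s=\sin t$ and expand $\mu F(1)$ in a series. As a consistency check, when $\delta_\Omega=0$ the ODE is solved by $\mu F(s)=\tfrac12\arcsin^2 s$, so $\mu F(1)=\pi^2/8$, which recovers \eqref{eq:payne_torsion}. For general $\delta_\Omega$, I would expand $G$ in odd powers of $s$ using the recursion coming from the ODE, integrate term by term from $0$ to $1$, and multiply by $1+\delta_\Omega$. After subtracting $\pi^2/8=\sum_k (2k+1)^{-2}$, I expect a telescoping of Beta-function ratios to leave exactly
\[
\delta_\Omega\sum_{k\ge1}\frac{2k}{(2k+1)^2(\delta_\Omega+1+2k)}.
\]
This bookkeeping is routine but delicate.
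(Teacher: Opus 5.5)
Your Steps 1 and 2 are the paper's argument. The bound $|\nabla u|^2\le\mu(1-u^2)$ is Proposition~\ref{thm:gradient_bound}, and your $F$ is exactly $(\Phi_0-\Phi(s))/\lambda_1$ from Proposition~\ref{thm:improved_payne_bounds}. Your integrating-factor remark does give $F''\le0$, because with $\gamma=1+\delta_\Omega$ one has $\int_s^1(1-r^2)^{\gamma/2-1}dr\le \frac1s\int_s^1 r(1-r^2)^{\gamma/2-1}dr$.

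Step 1 needs two repairs. First, with $P=|\nabla u|^2+\mu u^2$ exactly, $\nabla P=0$ at an interior point with $\nabla u\ne0$ only gives $|\nabla u|H_u=(N-1)\pi^2u/D_\Omega^2$. That is the equality case of \eqref{eq:AC_bound}, not a contradiction, so you must use $\mu+\eps$ and let $\eps\to0$, as the paper does. Second, a boundary maximum must be excluded separately, via $\partial_\nu P=-2H_u u_\nu^2<0$, which follows from strict convexity and Hopf. Also, Theorem~\ref{thm:herschbest} gives $\mu\ge \pi^2/(4R_\Omega^2)$, not the bound you wrote; you only need $\mu>0$ anyway.

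The genuine gap is Step 3. The solution you select in Step 2 by regularity at $s=1$ is
\[
\mu F'(s)=(1-s^2)^{-\gamma/2}\int_s^1(1-r^2)^{\frac{\gamma}{2}-1}\,dr ,
\]
and $\mu F'(0)=\tfrac12 B(\tfrac12,\tfrac{\gamma}{2})>0$. So $F'$ is not odd and cannot be expanded in odd powers of $s$.

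The odd series solution of your ODE is a different function, namely $-(1-s^2)^{-\gamma/2}\int_0^s(1-r^2)^{\gamma/2-1}dr$. It is negative on $(0,1)$ and blows up at $s=1$.

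Your consistency check hides this through a sign error. The function $\tfrac12\arcsin^2 s$ solves the ODE with right-hand side $-1$, and it is convex. The correct $\delta_\Omega=0$ solution is $\mu F(s)=\frac{\pi^2}{8}-\frac12\arccos^2 s$. The two agree at $s=1$ only because of the symmetry $\arcsin\leftrightarrow\arccos$.

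For $\gamma\neq1$ the odd-series route computes $\int_0^1(1-s^2)^{-\gamma/2}\int_0^s(1-r^2)^{\gamma/2-1}dr\,ds$. This is increasing in $\gamma$, while $\mu F(1)$ is decreasing, so for $\gamma>1$ it is strictly larger than $\mu F(1)$. For $\gamma\ge2$ it diverges. That case does occur: for a ball in $\mathbb R^3$, $\lambda_1D_\Omega^2=4\pi^2$, so $\gamma=2$. No telescoping can therefore yield the stated sum.

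What works, and is the paper's route, is to swap the order of integration,
\[
\mu F(1)=\int_0^1(1-r^2)^{\frac{\gamma}{2}-1}\left(\int_0^r(1-s^2)^{-\frac{\gamma}{2}}\,ds\right)dr .
\]
Then expand $(1-s^2)^{-\gamma/2}=\sum_{k\ge0}\frac{\Gamma(\gamma/2+k)}{\Gamma(\gamma/2)\,k!}\,s^{2k}$. The terms are positive, so term-by-term integration is legitimate. Use
\[
\int_0^1 r^{2k+1}(1-r^2)^{\gamma/2-1}\,dr=\frac{k!\,\Gamma(\gamma/2)}{2\,\Gamma(k+1+\gamma/2)} .
\]
This gives $\lambda_1F(1)=\gamma\mu F(1)=\sum_{k\ge0}\frac{\gamma}{(2k+1)(2k+\gamma)}$. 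The identity $\frac{\gamma}{(2k+1)(2k+\gamma)}-\frac{1}{(2k+1)^2}=\frac{2k(\gamma-1)}{(2k+1)^2(2k+\gamma)}$ then yields exactly the claimed bound.
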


\begin{theorem}
\label{thm:paynestakgoldbest}
    Let $\Omega \subset \mathbb{R}^N$ be an open, bounded, convex set. Then:
    \begin{equation}
    \label{eq:eff1a}
E(\Omega)\le\frac{1}{\sqrt{\pi}} \frac{\Gamma\left(\frac{\delta_{\Omega}}{2}+1\right)}{\Gamma\left(\frac{\delta_{\Omega}+3}{2}\right)}=\frac{2}{\pi} \int_0^{\frac{\pi}{2}} (\cos x)^{\delta_{\Omega}+1} \, dx;
\end{equation}
in particular
   \begin{equation} \label{eq:eff1b}
                  E(\Omega) \le \frac{2}{\pi}\sqrt{1-\frac{\delta_{\Omega}}{2+\delta_{\Omega}}} .
    \end{equation}
\end{theorem}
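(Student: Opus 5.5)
The plan is to repeat the Payne--Stakgold level-set argument, but to feed it a sharper gradient bound that comes from the improved log-concavity estimate (the one that also yields Theorem~\ref{thm:herschbest}). Which gradient inequality is actually available is the step I have not pinned down; see the last paragraph.

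The analytic core is as follows. For $t\in(0,1)$ set $\mu(t)=|\{u>t\}|$ and $F(t)=\int_{\{u>t\}}u\,dx$, so that $E(\Omega)|\Omega|=F(0)=\int_0^1\mu(t)\,dt$ and $F'(t)=t\mu'(t)$. Integrating the equation over $\{u>t\}$ gives
\[
\lambda_1(\Omega)F(t)=\int_{\{u=t\}}|\nabla u|\,d\mathcal H^{N-1},
\qquad
-\mu'(t)=\int_{\{u=t\}}\frac{d\mathcal H^{N-1}}{|\nabla u|}.
\]
Suppose a pointwise bound $|\nabla u|^2\le G(u)^2$ is available. Then $\lambda_1 F(t)\le G(t)^2(-\mu'(t))=-G(t)^2F'(t)/t$. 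This differential inequality, together with the normalization $F(t)=t\mu(t)+\int_t^1\mu$, compares $\mu$ with the distribution function of a one-dimensional model profile. It therefore bounds $\int_0^1\mu\,dt$ from above by $|\Omega|$ times the mean-to-peak ratio of that model.

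\begin{enumerate}
\item \emph{Classical case.} For $G(t)^2=\lambda_1(1-t^2)$ (Payne's $P$-function bound) the model is $\cos$ on $(-\pi/2,\pi/2)$. Its ratio is $\tfrac{2}{\pi}\int_0^{\pi/2}\cos x\,dx=2/\pi$, which recovers \eqref{effintro}.
\item \emph{Improved case.} I would use a stronger bound encoding the extra concavity of $\log u$ measured by $\delta_\Omega$. The natural form is $|\nabla u|^2\le G_\delta(u)^2$, where $G_\delta$ is the gradient profile of a one-dimensional comparison function $w$ with $w(\pm\pi/2)=0$, $\max w=1$, and $w$ behaving like $(\cos x)^{1+\delta}$ in the variable where $u=\cos x$ describes the classical extremal. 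One would derive this from the log-concavity estimate by a $P$-function argument: $P=|\nabla u|^2-G_\delta(u)^2$ satisfies an elliptic inequality, convexity gives the boundary sign, and the maximum principle yields $P\le 0$. With this $G_\delta$ in place of the classical one, the model ratio becomes $\tfrac{2}{\pi}\int_0^{\pi/2}(\cos x)^{\delta_\Omega+1}dx$.
\item \emph{Gamma-function form.} The Beta-function identity $\int_0^{\pi/2}\cos^{a}x\,dx=\tfrac{\sqrt\pi}{2}\Gamma(\tfrac{a+1}{2})/\Gamma(\tfrac a2+1)$ with $a=\delta_\Omega+1$ converts this into the first expression in \eqref{eq:eff1a}.
\item \emph{The estimate \eqref{eq:eff1b}.} Apply Cauchy--Schwarz on $(0,\pi/2)$:
\[
\Bigl(\int_0^{\pi/2}\cos^{\delta+1}x\,dx\Bigr)^2\le \frac{\pi}{2}\int_0^{\pi/2}\cos^{2\delta+2}x\,dx.
\]
Then use the Wallis-type bound $\int_0^{\pi/2}\cos^{2\delta+2}x\,dx\le\frac{\pi}{2}\frac{2}{2+\delta}$, which I would check via log-convexity of $\Gamma$. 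This gives $\tfrac{2}{\pi}\sqrt{2/(2+\delta)}=\tfrac{2}{\pi}\sqrt{1-\delta/(2+\delta)}$.
\end{enumerate}

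The main obstacle is step~2. One must identify exactly which gradient inequality the improved log-concavity estimate (with the diameter through $\delta_\Omega$) delivers, and verify that its one-dimensional model has mean-to-peak ratio exactly $\tfrac2\pi\int_0^{\pi/2}\cos^{\delta+1}$. I would first try to read it off as $|\nabla\log u|^2\le\lambda_1(1+\delta)^{-1}(u^{-2/(1+\delta)}-1)$, or a similar form, and then check consistency of the comparison ODE at $u=1$ and as $u\to0$.
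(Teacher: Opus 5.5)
Your proof has a genuine gap: the step you call the main obstacle is where all the content of the theorem lies, and step~4 is also incorrect.

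\textbf{Step 2: the gradient estimate.} The form you suggest fails the consistency check you mention. The bound $|\nabla\log u|^2\le\frac{\lambda_1}{1+\delta}\bigl(u^{-2/(1+\delta)}-1\bigr)$ means $|\nabla u|^2\le\frac{\lambda_1}{1+\delta}\,u^{2\delta/(1+\delta)}\bigl(1-u^{2/(1+\delta)}\bigr)$. This forces $\nabla u=0$ on $\de\Omega$, which contradicts Hopf's lemma.

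The improvement does not change the profile, only the constant. The paper proves (Proposition~\ref{thm:gradient_bound}) that $|\nabla u|^2\le\tilde\lambda(1-u^2)$, where $\tilde\lambda=\lambda_1-\frac{(N-1)\pi^2}{D_\Omega^2}=\frac{\lambda_1}{1+\delta_\Omega}$. This is the same estimate that gives Theorem~\ref{thm:herschbest}. The proof uses $P_\eps=\frac12|\nabla u|^2+\frac12(\tilde\lambda+\eps)u^2$. On $\de\Omega$ one has $\de_\nu P_\eps=-H_u u_\nu^2<0$. At an interior critical point of $P_\eps$ with $\nabla u\neq0$, the relation $\nabla^2u\nabla u=-(\tilde\lambda+\eps)u\nabla u$ gives $|\nabla u|H_u=\bigl(\frac{(N-1)\pi^2}{D_\Omega^2}-\eps\bigr)u$. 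That contradicts the Andrews--Clutterbuck consequence \eqref{eq:AC_bound}, so $\max P_\eps$ is attained at $x_0$.

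\textbf{The comparison step.} The exponent $\gamma=1+\delta_\Omega$ comes from the mismatch between $\lambda_1$ in the equation and $\tilde\lambda$ in the gradient bound. No one-dimensional eigenfunction realizes this mismatch, so your comparison with a model profile must be proved rather than invoked. Integrating $\lambda_1F\le-G^2F'/t$ only gives $F(t)\le F(0)(1-t^2)^{\gamma/2}$. That is the wrong direction for bounding $F(0)/\mu(0)$, so a multiplier is needed.

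The paper takes $\Phi$ with $w=-\Phi'$ the solution of $\tilde\lambda(1-s^2)w'-\lambda_1 sw=-\lambda_1$ that is bounded at $s=1$. It checks that $w'\le0$, so $\Delta\Phi(u)\le\lambda_1$ by \eqref{eq:grad_est}. The maximum principle applied to the superharmonic function $\lambda_1v_\Omega+\Phi(u)-\Phi_0$, which vanishes on $\de\Omega$, yields $|\nabla u|\le\frac{\lambda_1}{w(0)}|\nabla v_\Omega|$ on $\de\Omega$. Integrating over $\de\Omega$ gives $E\le 1/w(0)$, which is \eqref{eq:eff1a}. Integrating $\Delta\Phi(u)\le\lambda_1$ directly over $\Omega$ gives $w(0)\lambda_1\int_\Omega u\le\lambda_1|\Omega|$ just as well.

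The extremal level-set distribution is $-\mu'\propto(1-t^2)^{\gamma/2-1}$, not that of $\cos^{1+\delta}$. The two mean-to-peak ratios agree only through the identity $\int_0^{\pi/2}\cos^{\gamma}x\,dx\cdot\int_0^{\pi/2}\cos^{\gamma-1}x\,dx=\frac{\pi}{2\gamma}$.

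\textbf{Step 4.} Your chain gives $\bigl(\int_0^{\pi/2}\cos^{\delta+1}x\,dx\bigr)^2\le\frac{\pi}{2}\cdot\frac{\pi}{2}\cdot\frac{2}{2+\delta}$. Hence it yields only $\frac{2}{\pi}\int_0^{\pi/2}\cos^{\delta+1}x\,dx\le\sqrt{2/(2+\delta)}$, so the factor $\frac{2}{\pi}$ is lost. This cannot be repaired within that approach. The needed inequality $\int_0^{\pi/2}\cos^{1+\delta}x\,dx\le\sqrt{2/(2+\delta)}$ becomes an equality as $\delta\to0$. Cauchy--Schwarz against the constant function alone already loses a factor tending to $\pi/(2\sqrt{2})$ there.

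The paper instead uses the strict decrease of $x\mapsto\Gamma(x+\frac12)\sqrt{x+\frac12}/\Gamma(x+1)$ (Lemma~\ref{append}, proved via the digamma integral). Comparing $x=\frac{1+\delta}{2}$ with $x=\frac12$ gives $\frac{\Gamma(\frac{\delta}{2}+1)}{\Gamma(\frac{\delta+3}{2})}\le\frac{2}{\sqrt{\pi}}\sqrt{\frac{2}{2+\delta}}$. Inserted into \eqref{eq:eff1a}, this is exactly \eqref{eq:eff1b}.
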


\begin{remark}
In the planar case, since $D_{\Omega}\ge 2 R_{\Omega}$ trivially holds, inequality \eqref{herschbest} sharpens \eqref{eq:hersch_hernandez}. Moreover, the constant $\pi^2$ appearing in the remainder term ($N=2$), that is, on the right-hand side of
 \[
 		\lambda_1(\Omega) R_\Omega^2 - \frac{\pi^2}{4} \ge \pi^2 \frac{R_\Omega^2}{D_\Omega^2}
 \]
  is sharp. Indeed, considering a minimizing sequence of collapsing rectangles $\Omega_\varepsilon$ with sides $1$ and $\varepsilon$, 
 the first Dirichlet eigenvalue is $\lambda_1(\Omega_\varepsilon) = \pi^2(1 + 1/\varepsilon^2)$, while the inradius and diameter are given by $R_{\Omega_\varepsilon} = \varepsilon/2$ and $D_{\Omega_\varepsilon} = \sqrt{1+\varepsilon^2}$, respectively. Then
 \[
 \frac{\lambda_1(\Omega_\varepsilon) R_{\Omega_\varepsilon}^2 - \frac{\pi^2}{4}}{{R_{\Omega_{\eps}}^{2}/D_{\Omega_{\eps}}^{2}}}=\pi^{2}(1+\eps^{2}),
 \]  
 and we are done when $\eps \to 0^{+}$.
\end{remark}

\begin{remark}
\label{remeffnoeff}
Inequalities \eqref{eq:eff1a}--\eqref{eq:eff1b} are not effective for domains where $\delta_{\Omega}$ tends to 0, that is when $\lambda_{1} D_{\Omega}^{2}\to +\infty$, meaning that in this limit they do not provide any further information beyond the classical estimate \eqref{effintro}. Since
\[
j_{(N-2)/2,1}^{2} \left( \frac{D_{\Omega}}{R_{\Omega}} \right)^2\ge   \lambda_1(\Omega) D_{\Omega}^2 \ge \frac{\pi^2}{4} \left( \frac{D_{\Omega}}{R_{\Omega}} \right)^2,
\]
this happens if and only if
\[
\frac{D_\Omega}{R_\Omega} \to +\infty.
\]
To this matter, in Theorem \ref{thm:paynestakgoldmax} below we address the open problem regarding the optimality of $2/\pi$ for the efficiency when the dimension $N$ is fixed, giving a negative answer to this question in the planar case.
\end{remark}

\begin{theorem}
\label{thm:limeff}
    Let $\Omega_{n} \subset \mathbb{R}^2$ be a sequence of planar convex open sets with diameter $D_n$ and inradius $R_{n}$. Then if $D_{n}/R_{n} \to \infty$ as $n\to +\infty$, the efficiency of $\Omega_{n}$ is asymptotically strictly bounded by the two-dimensional rectangle:
    \begin{equation*}
        \limsup_{n \to \infty} E(\Omega_n) \le \frac{4}{\pi^2}.
    \end{equation*}
\end{theorem}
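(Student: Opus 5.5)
We will reduce Theorem \ref{thm:limeff} to a one-dimensional problem in the transversal direction. By invariance of $E$ under rigid motions and dilations, we normalize $\Omega_n$ so that its width in the $x_2$-direction is of order one and it elongates along $x_1$. Concretely, $\Omega_n=\{(x_1,x_2): a_n<x_1<b_n,\ g_n(x_1)<x_2<h_n(x_1)\}$ with $g_n$ convex and $h_n$ concave. Let $w_n(x_1)=h_n-g_n$ be the transversal width, a concave function with $\max w_n\simeq R_n$. After rescaling, set $\max w_n=1$ and $L_n:=b_n-a_n\simeq D_n\to\infty$.

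The heuristic is that, for an elongated convex domain, $u_n$ on each vertical section is close to $\phi(x_1)\sin(\pi(x_2-g_n)/w_n)$. The efficiency then factorizes: the transversal mean of the sine is $2/\pi$, while the longitudinal profile $\phi$ has mean-to-max ratio at most $2/\pi$. This is the $1$D Payne--Stakgold value, and the ratio is attained only for the pure cosine of a slab.

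\textbf{Step 1 (transversal estimate).} For each $x_1$, consider the section $I(x_1)=(g_n,h_n)$ and $U(x_1):=\max_{x_2}u_n(x_1,\cdot)$. Using log-concavity of $u_n$ (Brascamp--Lieb), together with the gradient/$P$-function bounds underlying Theorems \ref{thm:herschbest} and \ref{thm:paynestakgoldbest}, we want to show
\[
\int_{I(x_1)}u_n\,dx_2\le \Bigl(\tfrac{2}{\pi}+o(1)\Bigr) w_n(x_1)\,U(x_1),
\]
uniformly away from the tips. The natural route uses the 1D Payne--Stakgold estimate on the section: $\partial_{22}u_n+\lambda u_n=-\partial_{11}u_n$. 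A thin-domain estimate should give $\partial_{11}u_n=o(\lambda_n u_n)$ in an averaged sense, because $\lambda_n\sim \pi^2/w^2$ dominates longitudinal variation on scale $L_n$. Near the tips, where $w_n$ is small, the mass is negligible by convexity.

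\textbf{Step 2 (longitudinal estimate).} This step shows $\int w_n U\,dx_1\le (2/\pi+o(1))\,\max U\cdot\int w_n\,dx_1$. Rescale $x_1=L_n t$. Standard thin-domain asymptotics (Borisov--Freitas type, Friedlander--Solomyak) give $\lambda_n=\pi^2/\max w^2+ \mu_n/L_n^{2/3}+\dots$. The profile $U$ converges, after suitable blow-up near the maximum of the concave limit width $w$, to the ground state of a Schrödinger operator $-\phi''+V\phi$. Here $V=\pi^2(1/w^2-1/\max w^2)$ is convex, since $w$ is concave. Two regimes arise. If $w$ has a flat top, the limit is a Dirichlet interval problem and $U$ is cosine-like. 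Otherwise the eigenfunction localizes on a vanishing portion of the domain, and the efficiency tends to $0$. In both cases $\int wU/(\max U\int w)\le 2/\pi$. For the flat case this follows because $\phi$ is concave-type with mean at most $2/\pi$ times its max (the 1D Payne--Stakgold bound, or equivalently log-concavity). In the general case we compare the mean against the weight $w\le \max w$, which only decreases the ratio.

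\textbf{Step 3 (combine).} Multiplying the two factors gives $\limsup E(\Omega_n)\le (2/\pi)^2=4/\pi^2$, arguing along subsequences via Blaschke compactness of the normalized profiles $w_n(L_n\cdot)$.

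The main obstacle is Step 1, namely justifying the separation of variables uniformly, with $o(1)$ errors, using only convexity. I would try to prove $\|\partial_1 u_n\|\to0$ in the rescaled variables through the Rayleigh quotient. The difference $\lambda_n-\pi^2/\max w^2\to0$ controls $\int|\partial_1u|^2$, and the transversal Poincaré deficit controls the distance of $u$ from the sine profile in $L^2$. I would then upgrade this to the needed max/mean comparison using log-concavity, which turns $L^2$ closeness into uniform closeness on compact subsets.
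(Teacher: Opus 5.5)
Your architecture matches the paper's: a transversal factor $2/\pi$ from the sine profile on each vertical section, times a longitudinal factor $2/\pi$ for the ground state of $-\frac{d^2}{dx^2}+\pi^2/h^2$. A localizing regime gives efficiency $\to 0$, and a non-localizing one is handled by compactness. The genuine gap is the longitudinal inequality, which is the paper's key new ingredient, and your justification does not deliver it.

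In the non-localizing regime the limit problem is $-v''+W_\infty v=\hat\mu v$ on an interval, where $W_\infty$ is the limit of $L^2\pi^2\bigl(h(L\cdot)^{-2}-1\bigr)$. This limit is convex but in general not zero where the limiting width is flat: for $h(x)=1-x^2/L^4$ on $|x|<L/2$ one gets $W_\infty(t)=2\pi^2t^2$. So the profile is not a cosine, and you cannot read off the ratio $2/\pi$.

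Neither concavity nor log-concavity yields $\int v\le\frac{2}{\pi}|I|\max v$. The trapezoid $\min\{1,s/\varepsilon,(D-s)/\varepsilon\}$ on $(0,D)$ is concave and vanishes at the endpoints, yet its mean-to-max ratio is $1-\varepsilon/D$. $P$-function gradient bounds such as $v'^2\le\hat\mu(1-v^2)$ go the wrong way: they bound the lengths of superlevel sets from \emph{below}.

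What is needed is the upper bound $|\{v>t\}|\le\frac{2D}{\pi}\arccos t$. The paper gets it in Proposition~\ref{prop:integral_bound} from the Andrews--Clutterbuck sharp modulus of log-concavity for convex potentials,
$\bigl((\log v)'(y)-(\log v)'(x)\bigr)\frac{y-x}{|y-x|}\le-\frac{2\pi}{D}\tan\bigl(\frac{\pi|y-x|}{2D}\bigr)$.
Integrating this over the symmetric pairs $x=L_t+\xi$, $y=R_t-\xi$ of each superlevel interval gives the bound, and Cavalieri's principle then yields $\int v\le\frac{2D}{\pi}\int_0^1\arccos t\,dt=\frac{2D}{\pi}$.

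Two further points. First, the dichotomy must be drawn at the effective length $L$, the longest interval on which $h\ge1-1/L^2$, so that $\mu_1-\pi^2\asymp L^{-2}$. The paper's cases are $N/L\to\infty$, where the Grieser--Jerison bound $\int\phi_1\le CL$ kills the efficiency, and $N/L$ bounded. At the scale $D_n$, a flat limiting width does not exclude localization. For example, $h(x)=1-|x|L_n^{-3/2}$ on $|x|<L_n/2$ has limiting width $\equiv1$, yet its ground state lives on scale $L_n^{1/2}$. So your claim ``flat top $\Rightarrow$ cosine-like'' is false as stated, and your $L_n^{-2/3}$ eigenvalue expansion applies only to triangle-like profiles.

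Second, Step~1 is only a programme. The energy identity gives $\int|\partial_1u|^2\le(\lambda_1-\pi^2)\int u^2$ and hence $L^2$-closeness to $\phi\sin\alpha$. But $E$ is normalized by the maximum, so you need uniform closeness. The paper does not re-derive this. It imports the Grieser--Jerison estimate $|u-\phi_1\sin\alpha|\le C/L$, extended to $\{x:\max_y u(x,\cdot)>\sigma\}$. It then splits $\Omega$ into $\{u\le\sigma\}$, which costs at most $\sigma|\Omega|$, and $\{u>\sigma\}$, and lets $\sigma\to0$. The longitudinal bound is applied to $\phi_1$ itself, so no convergence of the section maxima $U(x_1)$ is needed.
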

As a consequence, we establish the following result:
\begin{theorem}
\label{thm:paynestakgoldmax}
    The efficiency functional $E(\Omega)$ attains its supremum in the class of planar bounded convex open sets $\mathcal C$. That is, there exists $\Omega^{*}$ such that 
    \[
    E(\Omega^{*}) = \max \left\{E(\Omega),\;\Omega \in \mathcal C\right \}<\frac{2}{\pi}.
    \]
\end{theorem}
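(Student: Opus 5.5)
We use the direct method in the class $\mathcal C$, relying on the scale and translation invariance of $E$ and on Theorem \ref{thm:limeff} to rule out degenerating maximizing sequences. Set $S=\sup_{\mathcal C}E$. Since the disk $B$ belongs to $\mathcal C$, we have $S\ge E(B)=\frac{2}{j_{0,1}}\frac{J_1(j_{0,1})}{j_{0,1}}\cdot j_{0,1}$, that is, $E(B)=2J_1(j_{0,1})/j_{0,1}\approx 0.432$. This is strictly larger than $4/\pi^2\approx 0.405$. (Any explicit domain with efficiency above $4/\pi^2$ would serve equally well.)

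Take a maximizing sequence $\Omega_n$ with $E(\Omega_n)\to S$. By invariance we may assume $R_{\Omega_n}=1$ and that each $\Omega_n$ contains the unit disk centered at the origin. Suppose $D_n/R_n$ were unbounded along a subsequence. Then Theorem \ref{thm:limeff} would give $S\le 4/\pi^2<E(B)\le S$, a contradiction. Hence $D_n\le C$, so the sets $\Omega_n$ lie in a fixed ball and contain $B_1$. By Blaschke's selection theorem, a subsequence converges in the Hausdorff metric to a convex body $\Omega^*$ containing $B_1$. In particular $\Omega^*$ is a bounded convex open set (its interior) with nonempty interior.

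Next we need continuity of $E$ along this convergence. For convex domains, Hausdorff convergence implies convergence of the characteristic functions in $L^1$, so $|\Omega_n|\to|\Omega^*|$. It also implies Mosco ($\gamma$-) convergence of $H^1_0(\Omega_n)$ to $H^1_0(\Omega^*)$, so $\lambda_1(\Omega_n)\to\lambda_1(\Omega^*)$. Moreover, the $L^2$-normalized eigenfunctions, extended by zero, converge strongly in $L^2$ and in $H^1(\mathbb R^2)$ to the eigenfunction of $\Omega^*$, using simplicity of $\lambda_1$.

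To pass to sup-normalization, we use uniform estimates. Convexity gives the uniform gradient bound $|\nabla u|^2\le\lambda_1(1-u^2)$ from the $P$-function method, valid for $\max u=1$, together with a uniform bound on $\lambda_1(\Omega_n)$ coming from the inclusion $B_1\subset\Omega_n$. Hence the sup-normalized eigenfunctions $u_n$ are uniformly Lipschitz on a fixed ball. By Arzelà–Ascoli they converge uniformly to some $u$, which is the first eigenfunction on $\Omega^*$ with $\max u=1$ and is nontrivial, since the maxima are preserved under uniform convergence. Therefore $\int u_n\to\int u$, and $E(\Omega_n)\to E(\Omega^*)$, so $E(\Omega^*)=S$.

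Finally, strictness. We have $E(\Omega^*)\le 2/\pi$ by \eqref{effintro}. For a bounded convex $\Omega^*$, $\delta_{\Omega^*}>0$ and Theorem \ref{thm:paynestakgoldbest} applies. In \eqref{eq:eff1a} the quantity $\int_0^{\pi/2}(\cos x)^{\delta+1}\,dx$ is strictly less than $\int_0^{\pi/2}\cos x\,dx=1$ for $\delta>0$, which yields $E(\Omega^*)<2/\pi$.

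The main obstacle is the continuity step: identifying the limit of the sup-normalized eigenfunctions and ensuring no loss of mass. We handle it through the uniform Lipschitz bound for convex domains together with the uniform inradius lower bound, which keeps $\lambda_1$ bounded and prevents degeneration. Everything else is standard, provided Theorem \ref{thm:limeff} is available to exclude elongating sequences.
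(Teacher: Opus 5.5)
Your proof is correct and follows the paper's route: you compare with the disk ($E(B)\approx 0.432>4/\pi^2$), use Theorem~\ref{thm:limeff} to rule out $D_n/R_n\to\infty$ along a maximizing sequence, and extract a Hausdorff limit via Blaschke's selection theorem. You also justify two points the paper only asserts, namely the continuity $E(\Omega_n)\to E(\Omega^*)$ (through the uniform Lipschitz bound and Arzel\`a--Ascoli) and the strict inequality $E(\Omega^*)<2/\pi$ (through \eqref{eq:eff1a} with $\delta_{\Omega^*}>0$).
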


The paper is organized as follows. In Section 2, we provide the proofs of Theorems ~1--3. These are primarily based on a gradient estimate for the eigenfunctions, derived using the improved log-concavity estimate established in \cite{andrews_clutterbuck}. 

On the other hand, the proof of Theorem \ref{thm:paynestakgoldmax} is first based on characterizing the behavior of the efficiency for elongating convex sets (Theorem \ref{thm:limeff}), and this will be the subject of Section 3. A key point will be the asymptotic analysis carried out by Jerison and Grieser in \cite{jerison,grieser_jerison}. A careful analysis of the relevant Schr\"odinger one dimensional equation will be carried out (see Section 3.1). In particular, 
we show that the Payne-Stakgold estimate for the efficiency holds for the first eigenfunction of any Schr\"odinger one dimensional operators with convex potential (Proposition \ref{prop:integral_bound}). Finally, in sections 3.2 and 3.3 the complete proofs of Theorem \ref{thm:limeff} and Theorem \ref{thm:paynestakgoldmax}, respectively, will be given.  

\medskip

To conclude this introduction, we outline some open problems:
\begin{itemize}
\item finding the optimal explicit expressions for the remainder terms in the aforementioned inequalities;
\item determining the exact planar maximizer(s) $\Omega^*$ of the efficiency functional $E$;
\item extending Theorem \ref{thm:paynestakgoldmax} to arbitrary dimensions $N > 2$;
\item evaluating whether the $2/\pi$ efficiency bound holds for broader classes of domains in $\mathbb{R}^2$ (e.g., simply connected sets or, as some examples suggest, annular domains). If valid, is this constant optimal? Does this framework generalize to $N$ dimensions?
\end{itemize}

\section{Proofs of Theorems 1, 2 and 3}
From now on, it is not restrictive to assume that $\Omega \subset \mathbb{R}^N$ is a smooth, bounded, strictly convex open set. Let $u\in C^{2}(\overline\Omega)$ be the positive eigenfunction satisfying
\[
 -\Delta u=\lambda_{1}(\Omega) u,\qquad u=0\text{ on }\de\Omega,\qquad u>0 \text{ in }\Omega.
\]
The log-concavity of $u$ \cite{brascamp_lieb} guarantees the existence of a unique stationary point $x_{0}\in \Omega$, which coincides with its maximum. Hereafter, we assume
\[
u(x_{0})=1.
\]
Let us denote by $H_u=H_u(x)$ the mean curvature of the level set $\{u=t\}$, where $u(x)=t$:  
\[
H_u=-\textrm{div} \left(\frac{\nabla u}{|\nabla u|}\right).
\]
We recall the improved log-concavity estimate proven by Andrews and Clutterbuck \cite[Theorem 1.5]{andrews_clutterbuck} for positive Dirichlet eigenfunctions:
\begin{theorem1}[Improved log-concavity]
If $w=\log (u)$, then for every $\eta \in \mathbb{R}^N$ and for every $x \in \Omega$ we have
\[
-\left\langle \nabla^2 w(x) \eta, \eta\right\rangle \ge \frac{\pi^2}{D_{\Omega}^2}|\eta|^2.
\]
\end{theorem1}
As observed in \cite{cristoforoni_nitsch_trombetti}, the above result implies that for all $x \in \Omega$ such that $\nabla u(x) \neq 0$
\begin{equation} \label{eq:AC_bound}
|\nabla u(x)| H_u(x) \ge \frac{(N-1)\pi^2}{D_{\Omega}^2} u(x).
\end{equation}
\begin{remark}
By taking the trace of the Hessian of $w=\log u$ and using the improved log-concavity, it follows that $-\Delta w = \lambda_1 + \frac{|\nabla u|^2}{u^2} \ge \frac{N\pi^2}{D_\Omega^2}$ in $\Omega$; evaluating this inequality at the maximum point yields
\begin{equation*}
    \label{eq:lambda_bound}
    \lambda_1 (\Omega) \ge \frac{N\pi^2}{D_\Omega^2}. 
\end{equation*}
\end{remark}
\begin{prop} \label{thm:gradient_bound}
The eigenfunction $u$ satisfies
\begin{equation} \label{eq:grad_est}
|\nabla u|^2 \le \tilde{\lambda} (1-u^2) \quad \text{in } \Omega,
\end{equation}
where $\tilde{\lambda} = \lambda_1 - \frac{(N-1)\pi^2}{D_{\Omega}^2}$.
\end{prop}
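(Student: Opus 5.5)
We prove \eqref{eq:grad_est} with Payne's $P$-function method, using the improved log-concavity bound \eqref{eq:AC_bound} to absorb the curvature term. Set
\[
P = |\nabla u|^2 + \tilde\lambda\, u^2 ,
\]
so that \eqref{eq:grad_est} is equivalent to $P \le \tilde\lambda$ in $\Omega$. At the maximum point $x_0$ we have $\nabla u(x_0)=0$ and $u(x_0)=1$, hence $P(x_0)=\tilde\lambda$. It therefore suffices to show that $P$ attains its maximum over $\overline\Omega$ at $x_0$. We argue by contradiction: suppose the maximum is attained at some point $\bar x\neq x_0$, so that $\nabla u(\bar x)\neq 0$.

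\textbf{Boundary case.} Suppose first $\bar x\in\partial\Omega$. There $u=0$ and $|\nabla u|=-\partial_\nu u$. Since $\partial\Omega$ is a level set of $u$, the equation restricted to $\partial\Omega$ reads $u_{\nu\nu}+H_u u_\nu \cdot(\text{sign})=-\lambda u=0$, i.e.\ $u_{\nu\nu} = -H |u_\nu|$ up to sign conventions, where $H\ge0$ is the mean curvature of $\partial\Omega$. This gives
\[
\partial_\nu P = 2 u_\nu u_{\nu\nu} = -2H|\nabla u|^2 \le 0 .
\]
Convexity thus rules out a boundary maximum by the Hopf lemma, provided $P$ satisfies an elliptic inequality near $\bar x$. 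That inequality is established in the interior case below.

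\textbf{Interior case.} Suppose $\bar x\in\Omega$ with $\nabla u(\bar x)\ne 0$. At $\bar x$ we have $\nabla P=0$, that is,
\[
u_{ij}u_j = -\tilde\lambda\, u\, u_i .
\]
Hence $\nabla u$ is an eigenvector of $D^2u$, and $u_{nn} = -\tilde\lambda u$ in the direction $n=\nabla u/|\nabla u|$. Writing the Laplacian in level-set coordinates,
\[
\Delta u = u_{nn} - H_u|\nabla u| ,
\]
so the equation $\Delta u=-\lambda_1 u$ becomes
\[
-\lambda_1 u = -\tilde\lambda u - H_u|\nabla u| ,
\qquad\text{i.e.}\qquad
H_u|\nabla u| = \frac{(N-1)\pi^2}{D_\Omega^2}\,u
\]
at $\bar x$. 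This is the equality case of \eqref{eq:AC_bound}, so there is no immediate contradiction. We must therefore use second-order information, namely $\Delta P\le 0$ at $\bar x$.

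The standard computation gives
\[
\Delta P = 2|D^2u|^2 - 2\lambda_1|\nabla u|^2 + 2\tilde\lambda|\nabla u|^2 - 2\tilde\lambda\lambda_1 u^2 .
\]
Using $|D^2u|^2\ge u_{nn}^2 = \tilde\lambda^2u^2$ together with $\lambda_1-\tilde\lambda=\frac{(N-1)\pi^2}{D_\Omega^2}=:c$, this yields
\[
\tfrac12\Delta P \ge -\tilde\lambda c\,u^2 - c|\nabla u|^2 + \sum_{\alpha<N}u_{\alpha\alpha}^2 .
\]
The tangential terms satisfy
\[
\sum_{\alpha<N} u_{\alpha\alpha} = -H_u|\nabla u| = -cu ,
\]
so Cauchy--Schwarz gives $\sum_{\alpha<N} u_{\alpha\alpha}^2\ge c^2u^2/(N-1)$. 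This alone is not sufficient.

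\textbf{Main obstacle and refined plan.} The naive estimate does not close, so the plan is to use the full matrix form of the improved log-concavity rather than only its trace. For tangential directions, $-w_{\alpha\alpha}\ge \pi^2/D^2$ gives $-u_{\alpha\alpha}\ge (\pi^2/D^2)\,u$. In addition, the first-order condition yields $u_{\alpha n}=0$ at $\bar x$.

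A cleaner route may be to work with $\psi=\arcsin u$, or to compare with a one-dimensional model, as in Payne's proof of Hersch's inequality. The original argument uses $P$ together with the identity
\[
|\nabla u|^2\,\Delta P \ge \dots + 2H_u|\nabla u|^3(\cdots) .
\]
The standard Payne inequality reads
\[
\Delta P + \frac{W\cdot\nabla P}{|\nabla u|^2} \ge \text{(curvature term)} ,
\]
where the curvature term is proportional to $\bigl(H_u|\nabla u| - c\,u\bigr)$ after choosing $\tilde\lambda=\lambda_1-c$. By \eqref{eq:AC_bound} this term is nonnegative. The maximum principle then applies away from $x_0$, and combined with the boundary case it gives $P\le\max\{P(x_0),\,\max_{\partial\Omega}P\}=\tilde\lambda$.

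Deriving this elliptic inequality with exactly the constant $c$ is the key computation, and I expect it to be the main difficulty.
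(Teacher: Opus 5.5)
There is a genuine gap: the interior case is never closed, and the proof finally rests on a Payne-type elliptic inequality for $P$ that you assert but do not derive.

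Your first-order computation is correct. At an interior maximum $\bar x$ of $P=|\nabla u|^2+\tilde\lambda u^2$ with $\nabla u(\bar x)\neq0$ you get exactly $|\nabla u|H_u=c\,u$, where $c=(N-1)\pi^2/D_\Omega^2$. This is equality in \eqref{eq:AC_bound}, so, as you note, there is no contradiction.

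The second-order route you then take does not close either. At $\bar x$, the tangential bounds $-u_{\alpha\alpha}\ge \pi^2u/D_\Omega^2$ together with $\sum_\alpha u_{\alpha\alpha}=-cu$ only give back $\sum_\alpha u_{\alpha\alpha}^2\ge c^2u^2/(N-1)$. Your estimate then reads $\frac12\Delta P\ge -cP+c^2u^2/(N-1)$. Whenever $P\ge\tilde\lambda$ and $u\le 1$, this lower bound is at most $-c\bigl(\lambda_1-N\pi^2/D_\Omega^2\bigr)\le 0$, so it is compatible with $\Delta P(\bar x)\le 0$ and yields nothing. Your boundary case is also made to depend on this unproven inequality.

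The missing idea is to perturb the coefficient, so that first-order information alone becomes contradictory:
\begin{itemize}
\item Take $P_\eps=|\nabla u|^2+(\tilde\lambda+\eps)u^2$ with $\eps>0$.
\item At an interior maximum with $\nabla u\neq0$, the condition $\nabla P_\eps=0$ gives $D^2u\,\nabla u=-(\tilde\lambda+\eps)u\,\nabla u$.
\item Insert this and $\Delta u=-\lambda_1u$ into $H_u=-\Delta u/|\nabla u|+\langle D^2u\nabla u,\nabla u\rangle/|\nabla u|^3$. You get $|\nabla u|H_u=(c-\eps)u<cu$, since $u>0$ in $\Omega$. This contradicts \eqref{eq:AC_bound}.
\item Hence every interior maximum of $P_\eps$ is a critical point of $u$, i.e.\ $x_0$. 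Therefore $|\nabla u|^2+(\tilde\lambda+\eps)u^2\le\tilde\lambda+\eps$, and letting $\eps\to0$ gives \eqref{eq:grad_est}.
\end{itemize}
No second derivatives of $P$ are needed.

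For the boundary, first reduce by approximation to smooth strictly convex $\Omega$. Then $H>0$, and $|\nabla u|>0$ on $\partial\Omega$ by Hopf's lemma applied to $u$. So $\partial_\nu P_\eps=-2H|\nabla u|^2<0$ strictly, which excludes a boundary maximum directly, with no elliptic inequality for $P_\eps$ required. This is exactly the argument in the paper.
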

Inequality \eqref{eq:grad_est} is a refinement of a well-known gradient estimate for the first eigenfunction, stated replacing $\tilde\lambda$ with $\lambda_{1}$ (see \cite{payne_stakgold1973,sperb}).

\begin{proof}
Fix $\eps > 0$. By standard regularity results, $u \in C^2(\overline\Omega)$. Let us consider $P_\eps \in C^1(\overline{\Omega})$ defined by:
\begin{equation*} \label{eq:Phi_eps_def}
P_\eps(x) = \frac{|\nabla u|^2}{2} + (\tilde{\lambda}+\eps)\frac{u^2}{2} ,
\end{equation*}
with $\eps>0$. The function $P_\eps$ attains its maximum at some point $x_1 \in \overline{\Omega}$. We claim that $x_{1}=x_{0}$, the maximum point of $u$. If this holds, then for any $\eps>0$ we have
\[
|\nabla u(x)|^2 + (\tilde{\lambda}+\eps) u(x)^2 \le \tilde{\lambda}+\eps
\]
that implies the thesis.

To prove the claim, we first show that $x_1$ cannot lie on the boundary of $\Omega$. Assume by contradiction that $x_1 \in \partial \Omega$; then 
\begin{equation}
    \label{eq:Phi_eps_boundary}
\frac{\partial P_\eps}{\partial \nu}(x_1) \ge 0.
\end{equation}
Since $u=0$ on $\partial \Omega$, it holds that
\[
\frac{\partial P_\eps}{\partial \nu} =   u_\nu u_{\nu\nu} \qquad\text{on }\partial \Omega.
\]
On the other hand, along any regular level set $\{u=t\}$, the Laplacian can be expressed as
\[
\Delta u = u_{\nu\nu} + H_{u} u_\nu, 
\]
and using the equation and the boundary condition, we find $u_{\nu\nu} = -H_u u_\nu$ on $\partial \Omega$. Thus:
\begin{equation*}
\frac{\partial P_\eps}{\partial \nu}(x_1) = -H_{u}(x_1) (u_\nu(x_1))^2.
\end{equation*}
The strict convexity of the boundary implies $H_u > 0$, while Hopf's Lemma gives $u_\nu(x_1) < 0$. Together, they yield $\frac{\partial P_\eps}{\partial \nu}(x_1) < 0$, which contradicts \eqref{eq:Phi_eps_boundary}. Hence,  $x_{1}\in\Omega$.

The next step consists of proving that $x_1$ is a critical point of $u$. To this end, let us observe that at $x_1$ we have $\nabla P_\eps = \nabla^2u \nabla u + (\tilde{\lambda}+\eps )u \nabla u = 0$, hence
\begin{equation} \label{eq:first_order_eps}
 \nabla^2u \nabla u = -(\tilde{\lambda}+\eps) u \nabla u\quad\text{at }x_{1}. 
\end{equation}
Assume by contradiction that $\nabla u(x_1) \ne 0$. Then at $x_1$ we have
\begin{equation}
\label{eq:Hu_fi}
H_u=-\dive \left( \frac{\nabla u}{|\nabla u|} \right)  = \frac{-\Delta u}{|\nabla u|} + \frac{\langle \nabla^2u \nabla u, \nabla u \rangle}{|\nabla u|^3}.
\end{equation}
Hence, substituting the eigenvalue equation and \eqref{eq:first_order_eps} into \eqref{eq:Hu_fi} yields, at $x_1$,
\begin{align*} \label{eq:Hu_final_derivation}
|\nabla u|H_u & = {\lambda_1 u} -{\tilde{\lambda}_\eps u} \nonumber \\
&= \left(\frac{(N-1)\pi^2 }{D^2}-\eps\right)u,
\end{align*}
contradicting \eqref{eq:AC_bound}. Therefore, $\nabla u(x_1) = 0$, and the claim is proved.
\end{proof}
The aforementioned gradient estimate is the starting point for proving our first three main results.
\begin{proof}[Proof of Theorem \ref{thm:herschbest}]
 Let $r$ denote the arc length parameter along the radial segment from $x_0$ to $x\in\partial\Omega$. Then, by \eqref{eq:grad_est}, we have
    \begin{equation*}
   -\frac{\partial u}{\partial r_x} \le  \frac{|\nabla u|}{\sqrt{1-u^2}} \le \sqrt{\tilde{\lambda}}
    \end{equation*}
Integrating along the segment connecting $x_0$ and $x\in \partial\Omega$, we get 
 \[
\sqrt {\tilde{\lambda}} |x_0-x|  \ge \int_0^1 \frac{1}{\sqrt{1-u^2}} du =\frac{\pi}{2},
 \] 
 which gives \eqref{herschbest}.
\end{proof}    
In order to prove Theorem \ref{thm:paynebest} and Theorem \ref{thm:paynestakgoldbest}, we need the following
\begin{prop} \label{thm:improved_payne_bounds}
    Let $v_{\Omega}$ be the torsion function in $\Omega$, with $M_{\Omega}=\max_{\Omega} v_{\Omega}$, and consider 
     $\gamma :=\frac{\lambda_1}{\tilde\lambda}=1+\delta_\Omega.
    $  Defining $\Phi: [0, 1] \to [0,+\infty)$ by
    \begin{equation*} \label{eq:barrier_def}
        \Phi(s) = \gamma \int_s^1 (1-\tau^2)^{-\frac{\gamma}{2}} \left( \int_\tau^1 (1-r^2)^{\frac{\gamma}{2}-1} \, dr \right) d\tau,
    \end{equation*}
    with $\Phi_0 = \Phi(0)$, the following inequalities hold:
    \begin{equation} \label{eq:pointwise_ineq}
        \Phi_0 - \Phi\left(u\right) \le  \lambda_1 M_\Omega \quad \text{in } \Omega,
    \end{equation}
and
    \begin{equation} \label{eq:boundary_grad}
        |\nabla u| \le \frac{\lambda_1}{\sqrt{\pi}} \frac{\Gamma\left(\frac{\gamma+1}{2}\right)}{\Gamma\left(\frac{\gamma}{2}+1\right)} |\nabla v_\Omega| \quad \text{on } \partial \Omega.
    \end{equation}
\end{prop}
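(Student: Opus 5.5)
The plan is a maximum-principle comparison between the torsion function and a function of $u$. Set
\[
F:=\lambda_1 v_\Omega+\Phi(u)-\Phi_0 .
\]
Since $u=0$ on $\partial\Omega$ and $\Phi(0)=\Phi_0$, we have $F=0$ on $\partial\Omega$. I will show that $F$ is superharmonic in $\Omega$. The maximum principle then gives $F\ge0$, i.e. $\Phi_0-\Phi(u)\le\lambda_1 v_\Omega\le\lambda_1M_\Omega$, which is \eqref{eq:pointwise_ineq}. Inequality \eqref{eq:boundary_grad} will come from the sign of $\partial F/\partial\nu$ on the boundary.

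\emph{Step 1: derivatives of $\Phi$.} Write $I(s)=\int_s^1(1-r^2)^{\frac\gamma2-1}dr$. Then
\[
\Phi'(s)=-\gamma(1-s^2)^{-\gamma/2}I(s)<0,\qquad
\Phi''(s)=\frac{\gamma}{1-s^2}-\gamma^2 s(1-s^2)^{-\frac\gamma2-1}I(s).
\]
I claim $\Phi''\ge0$ on $[0,1)$, which is equivalent to $g(s):=(1-s^2)^{\gamma/2}-\gamma sI(s)\ge0$. A direct computation gives $g'(s)=-\gamma I(s)\le0$, and $g(1)=0$, so $g\ge0$. Near $s=1$ one has $I(s)\sim c(1-s)^{\gamma/2}$, so $\Phi'$ stays bounded. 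Hence $\Phi(u)$ is $C^1$ up to the maximum point $x_0$. There $\nabla u=0$, so the term $\Phi''|\nabla u|^2$ causes no trouble. If needed, I will handle $x_0$ by a small-ball exclusion or by a distributional argument.

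\emph{Step 2: superharmonicity of $F$.} Using $\Delta u=-\lambda_1u$, we get $\Delta\Phi(u)=\Phi''(u)|\nabla u|^2-\lambda_1u\Phi'(u)$. Because $\Phi''\ge0$, Proposition \ref{thm:gradient_bound} allows replacing $|\nabla u|^2$ by $\tilde\lambda(1-u^2)$ as an upper bound:
\[
\Delta\Phi(u)\le \tilde\lambda\gamma-\tilde\lambda\gamma^2u(1-u^2)^{-\gamma/2}I(u)+\lambda_1\gamma u(1-u^2)^{-\gamma/2}I(u)=\lambda_1 .
\]
The last equality uses $\tilde\lambda\gamma=\lambda_1$; $\Phi$ is exactly the solution of the ODE that makes this an identity. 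Since $\Delta v_\Omega=-1$, this gives $\Delta F\le -\lambda_1+\lambda_1=0$ in $\Omega\setminus\{x_0\}$. With $F=0$ on $\partial\Omega$, the maximum principle yields $F\ge0$. The main technical obstacle is justifying this across $x_0$, where $\Phi''(u)$ may blow up. Because the bad term is multiplied by $|\nabla u|^2$ and $\Phi(u)$ is $C^1$, I expect it to be removable.

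\emph{Step 3: boundary gradient.} Since $F\ge0$ in $\Omega$ and $F=0$ on $\partial\Omega$, the outer normal derivative satisfies $\partial_\nu F\le0$. That is, $\lambda_1\partial_\nu v_\Omega+\Phi'(0)\partial_\nu u\le0$. Both normal derivatives are negative, with $\partial_\nu v_\Omega=-|\nabla v_\Omega|$ and $\partial_\nu u=-|\nabla u|$, and $\Phi'(0)=-\gamma I(0)$. This gives
\[
\gamma I(0)|\nabla u|\le\lambda_1|\nabla v_\Omega| .
\]
Finally, the Beta-function identity $I(0)=\tfrac{\sqrt\pi}{2}\,\Gamma(\tfrac\gamma2)/\Gamma(\tfrac{\gamma+1}{2})$ gives $\gamma I(0)=\sqrt\pi\,\Gamma(\tfrac\gamma2+1)/\Gamma(\tfrac{\gamma+1}2)$, and \eqref{eq:boundary_grad} follows.
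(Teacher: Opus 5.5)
Your proposal is correct and follows essentially the same route as the paper: the same comparison function $\lambda_1 v_\Omega+\Phi(u)-\Phi_0$, the same use of $\Phi''\ge 0$ combined with the gradient bound of Proposition~\ref{thm:gradient_bound}, and the same boundary normal-derivative argument with the Beta-function evaluation. Your check of $\Phi''\ge0$ via $g$ replaces the paper's observation that $w=-\Phi'$ is decreasing. Your worry at $x_0$ is unnecessary: since $-\Phi'(s)=\frac{\gamma}{2}\int_0^1 t^{\gamma/2-1}\bigl(1-(1-s^2)t\bigr)^{-1/2}\,dt$ is smooth on $(0,1]$ (indeed $\Phi''(1)=\gamma/(\gamma+2)$), $\Phi(u)\in C^2(\Omega)$ and no removability argument is needed.
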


\begin{proof}
Let $s \in [0, 1]$. Define
    \begin{equation*}
        w(s) = -\Phi'(s) = \gamma (1-s^2)^{-\frac{\gamma}{2}} \int_s^1 (1-r^2)^{\frac{\gamma}{2}-1} \, dr = \frac{\gamma}{2} \int_0^1 \frac{t^{\frac{\gamma}{2}-1}}{\sqrt{1-(1-s^2)t}} \, dt.
    \end{equation*}
    Notice that $w(s) \ge 0$ for $s \in [0, 1)$. Since $\gamma = \lambda_1 / \tilde{\lambda}$, it follows that $w(s)$ satisfies
    \begin{equation} \label{eq:w_ode}
        w'(s)\tilde{\lambda}(1-s^2) - \lambda_1 s w(s) = -\lambda_1.
    \end{equation}
    Furthermore, it is easy to check that $w'(s) \le 0$ on $[0,1)$. Consequently, $\Phi''(s) = -w'(s) \ge 0$. Computing the Laplacian of $\Phi(u)$ and using the eigenvalue equation, we obtain
    \begin{equation*}
        \Delta \left[ \Phi\left(u\right) \right] = \Phi''(u) |\nabla u|^2 - \lambda_1 u \Phi'({u}).
    \end{equation*}
    Since $\Phi'' \ge 0$, by \eqref{eq:grad_est} it holds that
    \begin{align*}
        \Delta \left[ \Phi(u) \right] &\le \Phi''(u)\tilde\lambda(1-u^2) - \lambda_1 u \Phi'(u) \\
        &= -w'(u)\tilde\lambda(1-u^2) + \lambda_1 u w(u)=\lambda_{1},
    \end{align*}
    where the last equality follows from \eqref{eq:w_ode}.
    The auxiliary function $\zeta= \lambda_1 v_\Omega + \Phi(u) - \Phi_0$ then satisfies
    \begin{equation*}
        \Delta \zeta= \lambda_1 \Delta v_\Omega + \Delta \Phi(u) = -\lambda_1 + \Delta \Phi(u) \le 0,
    \end{equation*}
    thus $\zeta$ is superharmonic in $\Omega$. Since $\zeta= 0$ on $\partial \Omega$, the maximum principle yields $\zeta\ge 0$ throughout $\Omega$, proving \eqref{eq:pointwise_ineq}.
    
    To derive the boundary gradient inequality, we observe that since $\zeta\ge 0$ in $\Omega$ and $\zeta= 0$ on $\partial \Omega$, it holds that
    \begin{equation}
        \label{pass_1}
        \frac{\partial \zeta}{\partial \nu} = \lambda_1 \frac{\partial v_\Omega}{\partial \nu} + \Phi'(0) \frac{\partial u}{\partial \nu} \le 0,
    \end{equation}
    where $\nu$ is the outward normal. Since $u$ and $v_\Omega$ are positive in $\Omega$ and vanish on $\partial \Omega$, their outward normal derivatives satisfy $\frac{\partial u}{\partial \nu} = -|\nabla u|$ and $\frac{\partial v_\Omega}{\partial \nu} = -|\nabla v_\Omega|$. Substituting, and noting that $\Phi'(0) = -w(0)$, from \eqref{pass_1} we get
    \begin{equation*}
  |\nabla u| \le \frac{\lambda_1}{w(0)} |\nabla v_\Omega|.
    \end{equation*}
    The constant $w(0)$ can be explicitly computed as:
    \begin{equation*}
        w(0) = \gamma \int_0^1 (1-r^2)^{\frac{\gamma}{2}-1} \, dr = \frac{\gamma}{2} B\left(\frac{1}{2}, \frac{\gamma}{2}\right) = \frac{\gamma\Gamma\left(\frac{1}{2}\right)\Gamma\left(\frac{\gamma}{2}\right)}{2\Gamma\left(\frac{\gamma+1}{2}\right)} = \sqrt{\pi} \frac{\Gamma\left(\frac{\gamma}{2}+1\right)}{\Gamma\left(\frac{\gamma+1}{2}\right)},
    \end{equation*}
yielding \eqref{eq:boundary_grad}.
\end{proof}
    Evaluating inequality \eqref{eq:pointwise_ineq} at the point $x_0 \in \Omega$, where $u(x_0) = 1$, we deduce the following corollary:
\begin{cor}
Under the assumptions of Proposition \ref{thm:improved_payne_bounds}, it holds that
 \begin{equation}
 \label{thm:improved_payne_bounds_eq}
        \lambda_1 M_\Omega \ge \Phi_0.
    \end{equation}
\end{cor}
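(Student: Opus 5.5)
The corollary follows directly from inequality \eqref{eq:pointwise_ineq} in Proposition \ref{thm:improved_payne_bounds}, evaluated at a point where $u$ attains its maximum. There are three steps.

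\emph{Step 1: locate the point.} By the log-concavity of $u$ and the normalization fixed at the beginning of this section, $u$ attains its maximum value $1$ at the unique critical point $x_0\in\Omega$.

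\emph{Step 2: evaluate the inequality.} Inequality \eqref{eq:pointwise_ineq} holds at every point of $\Omega$, so in particular at $x_0$. This gives
\[
\Phi_0-\Phi(1)\le \lambda_1 M_\Omega .
\]

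\emph{Step 3: compute $\Phi(1)$.} By definition, $\Phi(s)$ is an integral over $[s,1]$. Hence $\Phi(1)=0$, provided the integrand is integrable near $\tau=1$, so that $\Phi$ is finite and continuous up to $s=1$. This integrability is the only point needing care, since the factor $(1-\tau^2)^{-\gamma/2}$ blows up at $\tau=1$.

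Near $r=1$ we have $1-r^2\approx 2(1-r)$. Therefore the inner integral satisfies
\[
\int_\tau^1(1-r^2)^{\frac{\gamma}{2}-1}\,dr \sim \frac{2}{\gamma}\,2^{\frac{\gamma}{2}-1}(1-\tau)^{\gamma/2}.
\]
Multiplying by $(1-\tau^2)^{-\gamma/2}\sim 2^{-\gamma/2}(1-\tau)^{-\gamma/2}$, the integrand stays bounded as $\tau\to1$. Equivalently, the function $w=-\Phi'$ is bounded on $[0,1)$, which is also visible from its representation as $\frac{\gamma}{2}\int_0^1 t^{\gamma/2-1}(1-(1-s^2)t)^{-1/2}\,dt$ at $s=1$. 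So $\Phi$ is Lipschitz on $[0,1]$ and $\Phi(1)=0$.

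Substituting $\Phi(1)=0$ into the inequality of Step 2 yields $\lambda_1 M_\Omega\ge\Phi_0$, which is \eqref{thm:improved_payne_bounds_eq}.

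No real obstacle is expected here. The subsequent work, namely evaluating $\Phi_0$ as the series in Theorem \ref{thm:paynebest}, belongs to the proof of that theorem rather than to this corollary.
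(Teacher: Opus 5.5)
Your proof is correct and follows the paper's argument exactly: evaluate \eqref{eq:pointwise_ineq} at the maximum point $x_0$, where $u(x_0)=1$, and use $\Phi(1)=0$. The paper leaves implicit your check that the integrand defining $\Phi$ stays bounded near $\tau=1$. That check is correct (indeed $w(s)\to 1$ as $s\to1$) and is a welcome extra detail.
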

\begin{proof}[Proof of Theorem \ref{thm:paynebest}]
This result essentially follows from the prior proofs. Applying Fubini's theorem, the constant $\Phi_0 = \Phi(0)$ can be rewritten as
\begin{equation*}
    \Phi_0 = \gamma \int_0^1 (1-r^2)^{\frac{\gamma-2}{2}} \left( \int_0^r (1-\tau^2)^{-\frac{\gamma}{2}} \, d\tau \right) dr,
\end{equation*}
and expanding the inner integrand into a power series and integrating term-by-term, a straightforward computation yields
\begin{equation*}
    \Phi_0 = \frac{\pi^2}{8} + (\gamma-1) \sum_{k=1}^\infty \frac{2k}{(2k+1)^2(\gamma+2k)}.
\end{equation*}
Substituting this identity back into \eqref{thm:improved_payne_bounds_eq}, with $\delta_{\Omega}=\gamma-1$, the proof is concluded.
\end{proof}
   
By using Proposition \ref{thm:improved_payne_bounds} we get the proof of Theorem \ref{thm:paynestakgoldbest}.
%
 \begin{proof}[Proof of Theorem \ref{thm:paynestakgoldbest}]
Integrating the equations for the eigenvalue and the torsional rigidity over $\Omega$  yields 
\[
E(\Omega)= \frac{\ds\int_{\Omega} -\Delta u dx}{\ds\lambda_{1}(\Omega) \int_{\Omega}1\,dx}= \frac{\ds\int_{\partial \Omega} |\nabla u| \, d\mathcal H^{N-1}}{\lambda_{1}(\Omega)\ds\int_{\partial \Omega} |\nabla v_\Omega| \, d\mathcal H^{N-1}}.
\]
By applying \eqref{eq:boundary_grad}, we get
     \[
     E(\Omega) \le \frac{1}{\sqrt{\pi}} \frac{\Gamma\left(\frac{\gamma+1}{2}\right)}{\Gamma\left(\frac{\gamma}{2}+1\right)}
     \]
that is \eqref{eq:eff1a}. As regards \eqref{eq:eff1b}, by exploiting the log-convexity of the Gamma function, it holds that 
\begin{equation}
\label{gammamon}
x \mapsto \frac{\Gamma(x+1/2)\sqrt{x+1/2}}
{\Gamma(x+1)}
\quad\text{is decreasing for }x>0
\end{equation}
(See Lemma \ref{append} below). Hence
 \[
 \sup_{x\ge \frac{1}{2}} \frac{\Gamma(x+1/2)\sqrt{x+1/2}}{\Gamma(x+1)} =\frac{2}{\sqrt{\pi}}.
 \]
 For $x = \gamma/2 \ge 1/2$, we obtain 
    \begin{equation*}
        \frac{\Gamma\left(\frac{\gamma+1}{2}\right)}{\Gamma\left(\frac{\gamma}{2}+1\right)} \le \frac{2}{\sqrt{\pi}} \sqrt{\frac{2}{\gamma+1}}.
    \end{equation*}
 Then substituting in \eqref{eq:eff1a} and 
    recalling the definition of $\delta_{\Omega}+1=\gamma = \frac{\lambda_1}{\lambda_1 - \frac{\pi^{2}(N-1)}{D^{2}}}$, we get the thesis.
 \end{proof}

For the sake of completeness, we establish the monotonicity property \eqref{gammamon} in the following lemma. We briefly recall the well-known definitions of the special functions involved. Euler's Gamma function is defined for $z > 0$ by the 
\begin{equation*}
    \Gamma(z) = \int_0^\infty t^{z-1} e^{-t} \, dt.
\end{equation*}
Its logarithmic derivative is the digamma function, denoted by $\psi(z)$:
\begin{equation*}
    \psi(z) = \frac{d}{dz} \ln \Gamma(z) = \frac{\Gamma'(z)}{\Gamma(z)}.
\end{equation*}

\begin{lemma} \label{append}
    The auxiliary function
    \begin{equation*}
        f(x) = \frac{\Gamma\left(x+\frac{1}{2}\right)}{\Gamma(x+1)} \sqrt{x+\frac{1}{2}}
    \end{equation*}
    is strictly decreasing for all $x > -\frac{1}{2}$.
\end{lemma}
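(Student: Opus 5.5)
Work with $g(x)=\ln f(x)=\ln\Gamma(x+\tfrac12)-\ln\Gamma(x+1)+\tfrac12\ln(x+\tfrac12)$ on $x>-\tfrac12$ and show $g'(x)<0$, that is,
\[
\psi(x+1)-\psi\left(x+\tfrac12\right) > \frac{1}{2x+1}\qquad (x>-\tfrac12).
\]
Setting $y=x+\tfrac12>0$, the claim becomes $\psi(y+\tfrac12)-\psi(y)>\frac{1}{2y}$ for all $y>0$. This is the single inequality to prove; strict monotonicity of $f$ follows at once.

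The most direct route uses the integral representation
\[
\psi(z)=\int_0^\infty\left(\frac{e^{-t}}{t}-\frac{e^{-zt}}{1-e^{-t}}\right)dt ,\qquad z>0,
\]
which gives
\[
\psi(y+\tfrac12)-\psi(y)=\int_0^\infty e^{-yt}\,\frac{1-e^{-t/2}}{1-e^{-t}}\,dt=\int_0^\infty \frac{e^{-yt}}{1+e^{-t/2}}\,dt.
\]
Since $\frac{1}{2y}=\int_0^\infty \tfrac12 e^{-yt}\,dt$, the difference is
\[
\int_0^\infty e^{-yt}\left(\frac{1}{1+e^{-t/2}}-\frac12\right)dt .
\]
The integrand is strictly positive for $t>0$, because $e^{-t/2}<1$. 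This proves the strict inequality for every $y>0$, with no case distinction.

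As a cross-check one can argue by convexity instead. Let $h(y)=\psi(y+\tfrac12)-\psi(y)-\frac1{2y}$. Then $h(y)\to0$ as $y\to\infty$ by the asymptotic expansion $\psi(z)=\ln z-\frac1{2z}+O(z^{-2})$. Moreover, the functional equation $\psi(y+1)=\psi(y)+1/y$ gives
\[
h(y)-h(y+\tfrac12)=\psi(y+1)-\psi(y)-\frac{1}{2y}+\frac{1}{2y+1}-\Bigl(\psi(y+\tfrac12)-\psi(y)\Bigr)\cdot 2+\dots
\]
This telescoping is messier, so the integral representation remains the preferred argument.

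The main obstacle is justifying the representation of $\psi$ (Gauss's formula) and the exchange giving the difference. Both are standard: one can cite Gauss's integral or derive the difference from $\psi(z)=-\gamma+\sum_{n\ge0}\left(\frac1{n+1}-\frac1{n+z}\right)$ using $\frac1{n+z}=\int_0^\infty e^{-(n+z)t}dt$ and summing the geometric series. With this in hand the lemma, and hence \eqref{gammamon}, follows. The bound $\sup_{x\ge1/2}f=f(1/2)=\frac{2}{\sqrt\pi}$ used in the proof of Theorem \ref{thm:paynestakgoldbest} is then immediate from $\Gamma(1)=1$ and $\Gamma(3/2)=\sqrt\pi/2$.
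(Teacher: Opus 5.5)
Your proof is correct and is essentially the paper's argument: the same integral representation of $\psi$ gives $\psi(x+1)-\psi(x+\frac12)=\int_0^\infty e^{-(x+\frac12)t}(1+e^{-t/2})^{-1}\,dt$, which is compared directly with $\frac12\int_0^\infty e^{-(x+\frac12)t}\,dt=\frac{1}{2x+1}$. Delete the abandoned convexity ``cross-check'': it trails off unfinished, and its displayed identity has the $\psi$-terms with the wrong sign (the correct expression is $2(\psi(y+\frac12)-\psi(y))-(\psi(y+1)-\psi(y))-\frac{1}{2y}+\frac{1}{2y+1}$).
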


\begin{proof}
    We consider the logarithmic derivative of $f(x)$:
    \begin{equation*}
        \frac{d}{dx} \ln f(x) = \psi\left(x+\frac{1}{2}\right) - \psi(x+1) + \frac{1}{2x+1}.
    \end{equation*}
    To prove that $f'(x) < 0$, we show that
    \begin{equation*}
        \psi(x+1) - \psi\left(x+\frac{1}{2}\right) > \frac{1}{2x+1}.
    \end{equation*}
    Using the standard integral representation of the digamma function (see \cite[Eq. 6.3.21]{abramowitz1964}), the difference can be explicitly written as:
    \begin{equation*}
        \psi(x+1) - \psi\left(x+\frac{1}{2}\right) = \int_0^\infty \frac{e^{-\left(x+\frac{1}{2}\right)t} - e^{-(x+1)t}}{1-e^{-t}} \, dt=\int_0^\infty  \frac{e^{-\left(x+\frac{1}{2}\right)t}}{1+e^{-t/2}} \, dt.
    \end{equation*}
    Since 
        \begin{equation*}
        \int_0^\infty e^{-\left(x+\frac{1}{2}\right)t} \frac{1}{1+e^{-t/2}} \, dt > \frac{1}{2} \int_0^\infty e^{-\left(x+\frac{1}{2}\right)t} \, dt=\frac{1}{2x+1},
    \end{equation*}
we get $f'<0$ and the thesis follows.
\end{proof}

\section{Asymptotic behavior of $E(\Omega)$ and proofs of Theorems \ref{thm:limeff} and \ref{thm:paynestakgoldmax}}
As observed in Remark \ref{remeffnoeff}, inequalities \eqref{eq:eff1a}-\eqref{eq:eff1b} are not effective for domains where 
$\frac{D_\Omega}{R_\Omega} \to +\infty$.

Then to  analyze the limit of the efficiency of any thinning or elongating planar convex domains, we have to perform a different analysis. We rely on asymptotic theorems governing the behavior of the first Dirichlet eigenfunction, established by Jerison \cite{jerison}, Grieser and Jerison \cite{grieser_jerison}, and study the relevant one dimensional Schr\"odinger equation.

Throughout this Section we will assume that $\Omega$ is normalized, in the sense that:
\begin{itemize}
\item $\Omega$ is a bounded convex planar domain;
\item $\Omega$ is rotated such that its projection onto the $y$-axis has the minimum possible length, and dilated so that this minimal projection length equals $1$. 
\end{itemize}
Thus $\Omega$ can be written as 
\[
\Omega=\left\{(x, y): f_1(x)<y<f_2(x), a< x< b\right\}
\]
where $f_1$ and $f_2$ are convex and concave, respectively, on $(a,b)$ and satisfy
\[
 0 \leq f_1(x) \leq f_2(x) \leq 1 \quad \text { for } \quad a \leq x \leq b, \quad
 \quad \max _{[a, b]} f_2=1 , \quad \min _{[a, b]} f_1=0.
\]
Let $h(x)$ be the height of $\Omega$ at $x\in(a,b)$, that is
\[
h(x)=f_2(x)-f_1(x),
\]
with $h(x_{1})=\max_{[a,b]} h(x)=1$, and consider in $[a,b]$
\[
\mathcal{L}=\frac{d^2}{d x^2}-\frac{\pi^2}{h(x)^2}
\]
with homogeneous Dirichlet boundary conditions. 
Let $\mu_{1}$ be the first eigenvalue of $\mathcal{L}$ and let $\phi$ be the first positive eigenfunction:
\[
(\mathcal{L}+\mu_{1}) \phi=0 \text { on }(a, b), \quad \phi(a)=\phi(b)=0,
\]
normalized as $\max\phi=1$.

\begin{figure}[h]
    \centering
    \begin{tikzpicture}[scale=.8,xscale=1.1, yscale=3]
        
        \draw[->, thin] (-0.5, 0) -- (12.5, 0) node[right] {$x$};
        \draw[->, thin] (0, -0.1) -- (0, 1.2) node[right] {$y$};
        
        \node[below left] at (0,0) {0};
        \draw[dotted, thick] (0, 1) -- (2.5, 1);
        \draw (-0.1, 1) -- (0, 1) node[left] {1};
        
        \draw[thick] (0.5, 0.4) 
            .. controls (0.5, 1.0) and (1.5, 1.0) .. (3.0, 1.0) 
            .. controls (6.0, 1.0) and (9.0, 0.8) .. (11.5, 0.5);
            
        \draw[thick] (0.5, 0.4) 
            .. controls (0.5, 0.1) and (1.5, 0.0) .. (3.0, 0.0) 
            .. controls (6.0, 0.0) and (9.0, 0.2) .. (11.5, 0.5);

        \node at (5.5, 0.45) {\large $\Omega$};
        \node[above] at (8, 0.82) {$y = f_2(x)$};
        \node[below] at (8, 0.18) {$y = f_1(x)$};
        
        \draw[dotted, thick] (0.5, 0.4) -- (0.5, 0) node[below] {$a$};
        
        \draw[dotted, thick] (11.5, 0.5) -- (11.5, 0) node[below] {$b$};
        
        \coordinate (MaxU) at (2.5, 0.45);
        \draw[fill=white] (MaxU) circle (0.8pt); 
        \draw[dotted, thick] (MaxU) -- (2.5, 0) node[below] {$x_0$};
        \draw (2.5, -0.03) -- (2.5, 0.03); 
        \node[above right] at (MaxU) {\textsf{Max of} $u$};
        
        \draw[dotted, thick] (3.2, 0.01) -- (3.2, 0) node[below] {$x_1$};
       \draw (3.2, -0.03) -- (3.2, 0.03); 
        
        \coordinate (Xtop) at (9.5, 0.74);
        \coordinate (Xbot) at (9.5, 0.24);
        \draw[dotted, thick] (Xtop) -- (9.5, 0) node[below] {$x$};
        
        \draw[decorate,decoration={brace,amplitude=4pt}, thick] 
            (9.5, 0.3) -- (9.5, 0.7) node[midway, left=4pt] {$h(x)$};
    \end{tikzpicture}
\end{figure}

The total length of the domain along the $x$-axis is denoted by $N = b - a$.
If $\Omega$ is a rectangle, then $h$ is constant, $\lambda_{1}(\Omega)=\mu_{1}$ and
\[
u(x,y)= \phi(x) \sin \pi y =\sin \left(\pi\frac{x-a}{N}\right) \sin \pi y.
\]
The longitudinal behavior of the first eigenvalue is governed not merely by $N$, but rather by an effective length scale $L$. This parameter is defined as the length of the longest interval $I \subset [a, b]$ where the domain remains sufficiently close to its maximum height $1$:
\begin{equation*}
    h(x) \ge 1 - \frac{1}{L^2} \quad \text{for all } x \in I.
\end{equation*}
Due to the concavity of $h(x)$, the sharpest possible height drop from the maximum occurs when the domain is a triangle. This yields 
\begin{equation*} \label{eq:L_N_bounds}
    \sqrt[3]{N} \le L \le N.
\end{equation*}
Moreover, the convexity of $\Omega$ gives
\[
N\ge |\Omega| \ge \frac{N}{2},
\]
with equalities for rectangles and triangles respectively.

On the other hand, it is straightforward to show (see \cite{jerison}) that the first eigenvalue $\mu_{1}$ of $\mathcal L$ satisfies, for large $L$,
\begin{equation}
\label{bound1dim}
    \pi^2 < \mu_1 \le \pi^2\left(1 + \frac{3}{L^2}\right) +O\left(\frac{1}{L^{4}}\right)
\end{equation}

The correspondence between the two-dimensional eigenfunction $u(x,y)$ and the one-dimensional longitudinal profile $\phi_{1}(x)$ is formalized by the following result.
\begin{theorem1}[{Grieser-Jerison \cite[Theorem 1.6]{grieser_jerison}}]
     Let $I'$ be the interval concentric with $I$ of length $\frac N2$. There exists an absolute constant $C > 0$ such that
    \begin{equation*}
        \left| u(x,y) - \phi_{1}(x)\sin \alpha(x,y) \right| \le \frac{C}{L} \quad \text{for all } x \in I'
    \end{equation*}
    where
    \[
        \alpha(x,y) = \pi \frac{y-f_1(x)}{h(x)},\quad  y\in [f_{1}(x),f_{2}(x)].
    \]
\end{theorem1}

\subsection{Asymptotic Analysis of the $L^1$ Norm of the First One-Dimensional Eigenfunction}
The first step of our analysis consists in capturing the asymptotic behavior of the $L^1$ norm of the positive normalized first one-dimensional eigenfunction $\phi_{1}$, which solves
  \begin{equation*}
\begin{cases}
    -\phi_1''(x) + V(x)\phi_1(x) = \mu_1 \phi_1(x) \quad \text{in } (a, b),
   \\
   \phi_1(a) = \phi_1(b) = 0, \\
   \phi_1\ge0, \; \max \phi_1=1.
\end{cases}
\end{equation*}
where $V(x) = \pi^2 / h(x)^2$ represents a convex potential due to the strict concavity of $h(x)$. In the following result, we establish an asymptotic upper bound relative to the domain area. 

First of all, we show that the Payne-Stakgold estimate for the efficiency holds for the first eigenfunction of any Schr\"odinger one dimensional operators with convex potential.
\begin{prop}
\label{prop:integral_bound}
Let $(\alpha,\beta)$ be a bounded interval, and let $W(s)$ be a convex potential defined on $(\alpha,\beta)$. Consider a positive solution $v_1$ such that
  \begin{equation*}
\begin{cases}
    -v_{1}'' + W(s)v_{1}(s) = \mu_{1} v_1(s) \quad \text{in } (\alpha, \beta),
   \\
   v_1(\alpha) = v_1(\beta) = 0, \\
   v_{1}\ge 0, \; \max v_{1}=1,
\end{cases}
\end{equation*}
where $\mu_{1}$ is the first eigenvalue. Then
\[
\int_{\alpha}^{\beta}v_1(s)ds \le \frac{2}{\pi}(\beta-\alpha).
\]
\end{prop}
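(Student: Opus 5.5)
The plan is to reduce the inequality to a pointwise comparison with a ``tent'' built from the sine profile. Let $s_0\in(\alpha,\beta)$ be a maximum point of $v_1$, so $v_1(s_0)=1$ and $v_1'(s_0)=0$. Since $W$ is convex, $v_1$ is log-concave (the one-dimensional Brascamp--Lieb result already used in Section 2). Hence $v_1$ is strictly increasing on $(\alpha,s_0)$ and strictly decreasing on $(s_0,\beta)$. Set
\[
\psi(s)=\arcsin v_1(s)\in[0,\tfrac{\pi}{2}].
\]
The key observation is that the tent function
\[
T(s)=\tfrac{\pi}{2}\min\left\{\tfrac{s-\alpha}{s_0-\alpha},\tfrac{\beta-s}{\beta-s_0}\right\}
\]
gives exactly $\int_\alpha^\beta \sin T\,ds=\frac{2}{\pi}(\beta-\alpha)$, by a linear change of variables on each side of $s_0$. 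So it suffices to show $\psi\le T$, i.e.\ that $\psi$ is convex on $[\alpha,s_0]$ and on $[s_0,\beta]$. Then $\psi$ lies below its chords, which join $(\alpha,0)$ or $(\beta,0)$ to $(s_0,\pi/2)$, and $v_1=\sin\psi\le\sin T$ follows.

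A direct computation using the equation gives
\[
\psi''=\frac{v_1\bigl[v_1'^2-(\mu_1-W)(1-v_1^2)\bigr]}{(1-v_1^2)^{3/2}},
\]
so convexity of $\psi$ is equivalent to the reverse gradient estimate $F:=v_1'^2-(\mu_1-W)(1-v_1^2)\ge 0$. Differentiating and using the equation, the terms in $v_1'$ cancel and one gets
\[
F'=W'(1-v_1^2).
\]
Moreover $F(s_0)=0$, and $W(s_0)\le\mu_1$ because $v_1''(s_0)\le0$. Because $W$ is convex, $W'$ is nondecreasing and changes sign at most once. So on each side of $s_0$, $F$ is monotone or has a single interior extremum, and $F\ge0$ reduces to checking its endpoint behaviour together with the sign of $W'$ near $s_0$.

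\textbf{Main obstacle.} The step I expect to fail in general is exactly this sign control. When the minimum point of $W$ differs from $s_0$, $F$ may dip negative on one side. If that happens, I would not insist on pointwise convexity of $\psi$ but aim for the integral inequality directly, writing
\[
\int v_1\,ds=\int_0^1 |\{v_1>t\}|\,dt .
\]
On each monotone branch I would change variables $s\mapsto v_1$, which turns the claim into an inequality between $\int_0^1 ds/dv$ and $\int_0^1 v\,ds/dv$. I would then try a rearrangement or ``most concentrated profile'' argument: among log-concave eigenfunctions of convex-potential operators on a fixed interval, show that the $L^1$ norm is maximised by the constant potential, for which the bound is an equality. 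A symmetrisation of the potential about $s_0$, combined with the identity $\mu_1\int v_1=|v_1'(\alpha)|+|v_1'(\beta)|+\int Wv_1$, is my first fallback for closing the argument.
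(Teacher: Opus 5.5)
Your computations are correct: the formula for $\psi''$, the identity $F'=W'(1-v_1^2)$, $F(s_0)=0$, and $\int_\alpha^\beta\sin T\,ds=\frac{2}{\pi}(\beta-\alpha)$. They prove the claim whenever $s_0$ is a minimum point of $W$ (e.g.\ $W$ symmetric about the midpoint), because then $W'\le0$ on $(\alpha,s_0)$ and $W'\ge0$ on $(s_0,\beta)$, so $F\ge0$ on both sides.

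In general, however, the key step is false. Since $F(s)=\int_{s_0}^s W'(\tau)\bigl(1-v_1^2(\tau)\bigr)\,d\tau$, if $W'(s_0)\neq0$ then $F<0$ immediately on one side of $s_0$. For example, take $W(s)=cs$ with $c>0$ on $(0,1)$. Then $F<0$ on all of $(0,s_0)$, so $\psi$ is strictly concave there and $v_1>\sin T$ on $(0,s_0)$. The pointwise tent comparison therefore fails.

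Your fallback does not close the gap. Saying that ``the $L^1$ norm is maximised by the constant potential'' is a restatement of the claim. You give no reason why symmetrising $W$ would not decrease $\int v_1$, and symmetrising about $s_0$ does not even preserve the interval unless $s_0$ is the midpoint. The identity $\mu_1\int v_1=|v_1'(\alpha)|+|v_1'(\beta)|+\int Wv_1$ also gives no control in the needed direction.

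The missing idea is that you only need to bound the \emph{total} length $\ell(t)=R_t-L_t$ of each superlevel set $\{v_1>t\}$. You do not need the two one-sided distances $s_0-L_t$ and $R_t-s_0$ separately. Your tent would give $s_0-L_t\le(s_0-\alpha)\frac{2}{\pi}\arccos t$ and $R_t-s_0\le(\beta-s_0)\frac{2}{\pi}\arccos t$. Only their sum, $\ell(t)\le\frac{2D}{\pi}\arccos t$ with $D=\beta-\alpha$, holds in general, and it already suffices by Cavalieri since $\int_0^1\arccos t\,dt=1$.

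The paper gets this from a two-point estimate: the Andrews--Clutterbuck modulus of log-concavity for $w=\log v_1$,
$w'(y)-w'(x)\le-\frac{2\pi}{D}\tan\bigl(\frac{\pi}{2D}(y-x)\bigr)$ for $x<y$. Apply it with $x=L_t+\xi$, $y=R_t-\xi$ and integrate over $\xi\in[0,\ell(t)/2]$. This gives $\log t-w(M)\le\log\cos\bigl(\frac{\pi\ell(t)}{2D}\bigr)$, where $M$ is the midpoint of $\{v_1>t\}$, and $w(M)\le0$ yields the bound. Because the estimate couples points on opposite sides of the maximum, it captures the compensation between the two sides that your one-sided convexity argument misses. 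In rearrangement language, it says the symmetric decreasing rearrangement of $v_1$ lies below $\cos(\pi s/D)$ on $(-D/2,D/2)$, which is the precise form of the rearrangement statement you were reaching for.
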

\begin{proof}
Let $w(s) = \log v_{1}(s)$. A key ingredient is the following modulus of log-concavity estimate (see \cite{andrews_clutterbuck}):
\begin{equation}
\label{ace}
\left( w'(y) - w'(x) \right) \frac{y-x}{|y-x|} \le -\frac{2\pi}{D} \tan\left( \frac{\pi}{2D} |y-x| \right),
\end{equation}
for any $x,y \in (\alpha,\beta)$, with $D=\beta-\alpha$.

For any $t \in (0, 1)$, define $J_t = \{s \in (\alpha, \beta) : v_1(s) > t\}$. Due to the log-concavity of $v_1$, $J_t$ is an interval $(L_t, R_t)$ such that $v_1(L_t) = v_1(R_t) = t$. The length of this level set is denoted by 
\[
\ell(t) = R_t - L_t.
\]
Let $p: = \log t < 0$, so that $w(L_t) = w(R_t) = p$. The midpoint of $J_{t}$ is $M = \frac{L_t + R_t}{2}$.
For $\xi \in \left[0, \frac{\ell(t)}{2}\right]$, define:
\[
y(\xi) = R_t - \xi, \quad
x(\xi) = L_t + \xi.
\]
Notice that $y(\xi) - x(\xi) = \ell(t) - 2\xi \ge 0$. Substituting these into \eqref{ace} yields 
\begin{equation}
\label{4}
w'(R_t - \xi) - w'(L_t + \xi) \le -\frac{2\pi}{D} \tan\left( \frac{\pi}{2D} (\ell(t) - 2\xi) \right).
\end{equation}
We integrate both sides of inequality \eqref{4} with respect to $\xi$ over the interval $\left[0, \frac{\ell(t)}{2}\right]$. For the left-hand side, we get
\[
\int_0^{\ell(t)/2} \left[ w'(R_t - \xi) - w'(L_t + \xi) \right] d\xi 
= 2p - 2w(M).
\]
For the right-hand side, a straightforward computation gives
\begin{equation*}
\int_0^{\ell(t)/2} -\frac{2\pi}{D} \tan\left( \frac{\pi}{2D} (\ell(t) - 2\xi) \right) d\xi 
= 2 \log \cos\left( \frac{\pi \ell(t)}{2D} \right).
\end{equation*}
Combining the two integrals, and recalling that $w(M)\le 0$, we find
\begin{equation*}
p \le \log \cos\left( \frac{\pi \ell(t)}{2D} \right).
\end{equation*}
Being $t= e^{p}$, this implies
\begin{equation*}
\ell(t) \le \frac{2D}{\pi} \arccos(t).
\end{equation*}
By Cavalieri principle, this yields
\begin{equation*}
\int_\alpha^\beta v_1(s)\,ds = \int_0^1 \ell(t)\,dt \le \frac{2D}{\pi} \int_0^1  \arccos(t)\,dt= \frac{2}{\pi} D,
\end{equation*}
and the proof is completed.
\end{proof}

Now, let us consider the first one dimensional eigenfunction $\phi_{1}$ of the relevant Schr\"odinger operator $\mathcal L$.
\begin{prop}
\label{prop:limit-uni}
Let $\Omega_{L} \subset \mathbb{R}^2$ be a family of normalized bounded planar convex open sets, where the parameter $L \to \infty$ denotes their effective length. It holds that
\begin{equation}
\label{limit-uni}
    \limsup_{L \to \infty} \frac{1}{|\Omega_L|} \int_a^b \phi_1(x) \,dx \le \frac{2}{\pi}.
\end{equation}
\end{prop}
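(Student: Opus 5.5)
The plan is to reduce \eqref{limit-uni} to Proposition \ref{prop:integral_bound}, applied on the region where $h$ is close to its maximum, and to show that everything outside this region contributes $o(|\Omega_L|)$. Recall that $|\Omega_L|=\int_a^b h\,dx\ge N/2$. So it suffices to prove
\[
\int_a^b\phi_1\,dx\le \frac{2}{\pi}\int_a^b h\,dx+o(N) \qquad (L\to\infty).
\]
Passing to a subsequence, I may assume $L/N\to\theta\in[0,1]$.

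\emph{Step 1 (exponential tails).} Fix $\delta\in(0,1)$ and let $J_\delta=\{x:\ h(x)\ge 1-\delta\}$, which is an interval by concavity of $h$. Outside $J_\delta$ we have $V-\mu_1\ge \pi^2\big((1-\delta)^{-2}-1\big)-3\pi^2/L^2-O(L^{-4})\ge c_\delta>0$ for $L$ large, using \eqref{bound1dim}. Hence $\phi_1''=(V-\mu_1)\phi_1\ge c_\delta\phi_1$ there. By log-concavity, $\phi_1$ is monotone on each component of $(a,b)\setminus J_\delta$, and since $\phi_1\le1$, a comparison with $e^{-\sqrt{c_\delta}\,\mathrm{dist}(x,J_\delta)}$ gives
\[
\int_{(a,b)\setminus J_\delta}\phi_1\le \frac{2}{\sqrt{c_\delta}}.
\]
This is $O_\delta(1)=o(N)$, because $N\ge L\to\infty$.

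\emph{Step 2 (the core region).} On $J_\delta$ I want $\int_{J_\delta}\phi_1\le \frac{2}{\pi}|J_\delta|+o(N)$. Combined with $|\Omega_L|\ge\int_{J_\delta}h\ge(1-\delta)|J_\delta|$, Step 1, and $\delta\to0$ at the end, this would give \eqref{limit-uni}. To get the bound on $J_\delta$, I would rerun the level-set argument in the proof of Proposition \ref{prop:integral_bound}. That argument only uses the Andrews--Clutterbuck modulus \eqref{ace} and the fact that $\max\phi_1=1$. So I would try to obtain \eqref{ace} with $D$ replaced by $|J_\delta|+o(N)$ for the part of $\phi_1$ above a small threshold $t_0$. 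One way to do this is to cut $\phi_1$ at the points where it reaches level $t_0$. By Step 1, these points lie within distance $O_\delta(|\log t_0|)$ of $J_\delta$. One would then compare $\phi_1$ on this slightly enlarged interval $\tilde J$ with the Dirichlet eigenfunction of $-\frac{d^2}{dx^2}+V$ on $\tilde J$. The potential is still convex, so Proposition \ref{prop:integral_bound} applies with $D=|\tilde J|=|J_\delta|+O_\delta(|\log t_0|)$. The levels below $t_0$ contribute at most $t_0N$, and this is negligible after letting $t_0\to0$.

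\emph{Step 3 (the case $\theta=0$).} If $L=o(N)$, a cruder bound suffices. Concavity of $h$ together with the definition of $L$ shows that $|J_\delta|$ is controlled by $L$: take $\delta=K/L^2$, so that $|J_\delta|\lesssim KL$ and $c_\delta\gtrsim K/L^2$. Steps 1--2 then give $\int\phi_1=O(L)=o(N)=o(|\Omega_L|)$, and \eqref{limit-uni} holds trivially.

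The main obstacle is Step 2: the perturbation showing that $\phi_1$ restricted near $J_\delta$ is close enough to a Dirichlet eigenfunction on $\tilde J$ that the level-set estimate $\ell(t)\le\frac{2|\tilde J|}{\pi}\arccos t$ survives up to $o(N)$ errors. If the direct comparison is awkward, I would instead redo the Andrews--Clutterbuck two-point maximum principle directly for $\log\phi_1$ on $\tilde J$, treating the small boundary values $\phi_1=t_0$ as a perturbation.
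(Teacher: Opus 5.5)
\textbf{Step 2 has a genuine gap.} Steps 1 and 3 are correct. The exponential comparison gives the tail bound $2/\sqrt{c_\delta}$. With $\delta=2/L^2$, the concavity of $s\mapsto|\{h\ge 1-s\}|$ gives $|J_\delta|\le 2L$ and $c_\delta\gtrsim \pi^2/L^2$, hence $\int_a^b\phi_1=O(L)$. This uses only Step 1 and $\phi_1\le 1$, not Step 2, and it is a self-contained proof of the $O(L)$ bound that the paper quotes from Grieser--Jerison for its Case 1.

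The problem is Step 2, which for $L/N\to\theta>0$ is the entire content of the proposition. Neither route you suggest goes through as stated. Write $\tilde J=\{\phi_1>t_0\}=(\tilde a,\tilde b)$.
\begin{itemize}
\item \emph{Comparison route.} $\phi_1=t_0>0$ on $\partial\tilde J$, while the normalized Dirichlet eigenfunction $\psi$ of $\tilde J$ vanishes there, and $\mu_1<\mu_1(\tilde J)$. So there is no maximum-principle inequality between $\phi_1$ and $\psi$. Getting $\int_{\tilde J}\phi_1\le\int_{\tilde J}\psi+o(N)$ requires quantitative closeness of the two functions. That means controlling $\mu_1(\tilde J)-\mu_1$ against the spectral gap of $-\frac{d^2}{dx^2}+V$ on $\tilde J$, which is only of order $L^{-2}$. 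This is a real perturbation problem that you have not carried out.
\item \emph{Two-point route.} $w'=\phi_1'/\phi_1$ is finite at $\tilde a,\tilde b$. So $Z(x,y)=w'(y)-w'(x)+\frac{2\pi}{D'}\tan\big(\frac{\pi}{2D'}(y-x)\big)\le 0$ must be verified by hand on the edges $x=\tilde a$ and $y=\tilde b$. Since $\partial_yZ(\tilde a,y)\to w''(\tilde a)+\pi^2/D'^2$ as $y\to\tilde a^+$, this already requires $w''(\tilde a)\le-\pi^2/D'^2$ with $D'\approx|\tilde J|$. But \eqref{ace} on $(a,b)$ only gives $w''\le-\pi^2/N^2$, and $|\tilde J|$ may be a fixed fraction of $N$. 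In the tail, $w''=V-\mu_1-(w')^2$ need not be any more negative than that.
\end{itemize}

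\textbf{The missing idea (this is the paper's proof).} The paper avoids any finite-$L$ perturbation and lets the Dirichlet condition appear only in the limit.
\begin{itemize}
\item Set $v_L(y)=\phi_1(Ly)$. It solves $-v_L''+W_Lv_L=\hat\mu_Lv_L$, with $W_L=\pi^2L^2(h_L^{-2}-1)\ge0$ convex and $\hat\mu_L=L^2(\mu_1-\pi^2)$ bounded by \eqref{bound1dim}.
\item Uniform Lipschitz bounds give $v_L\to v_\infty$ uniformly.
\item $W_L\to W_\infty$, which is finite exactly on an interval $J_{\textit{fin}}\supseteq[-\frac12,\frac12]$ on which $h_\infty\equiv1$.
\item The energy bound $\int W_Lv_L^2\le C$ together with Fatou forces $v_\infty=0$ off $\overline{J_{\textit{fin}}}$.
\item $W_L(y_L)\le\hat\mu_L$ at the maximum point $y_L$ of $v_L$ preserves $\max v_\infty=1$.
\end{itemize}
Thus $v_\infty$ is exactly the normalized first Dirichlet eigenfunction of a convex-potential operator on $J_{\textit{fin}}$. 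Proposition \ref{prop:integral_bound} then applies with no error term, and $\liminf_L|\Omega_L|/L\ge\int h_\infty\ge|J_{\textit{fin}}|$ concludes. Your proposal needs this compactness step, or a fully executed quantitative perturbation argument, to close the case $\theta>0$.
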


\begin{proof}
First, let us observe that
\begin{equation*}
    |\Omega_L| = \int_a^b h(x) \,dx \ge \int_I h(x) \,dx.
\end{equation*}
By the definition of $L$, we have $h(x) \ge 1 - 1/L^2$ on $I$. Thus:
\begin{equation*} \label{eq:area_bound}
 \frac{N}{L} \ge  \frac{|\Omega_L|}{L} \ge   1 - \frac{1}{L^2}.
\end{equation*}
 We now consider the two possible asymptotic regimes for the sequence of domains.

\textbf{Case 1: $N/L \to +\infty$.} This is what happens, for example, in elongating ellipses (where $L \sim \sqrt{N}$), or triangular domains (where $ L\sim \sqrt[3]{N}$). In \cite{grieser_jerison} it is shown that
\begin{equation*}
    \int_a^b \phi_1(x) \,dx \le C L,
\end{equation*}
for some positive constant $C$. Therefore \eqref{limit-uni} trivially holds. Indeed, since $|\Omega| \ge N/2$, we obtain $\frac{1}{|\Omega|} \int_a^b \phi_1(x) \,dx \le \frac{2 C L}{N} \to 0$.

\textbf{Case 2: $N/L \to \gamma\in [1,+\infty[$}.  By translation invariance, we can assume without loss of generality that $I = [-L/2, L/2]$. Consider the rescaled eigenfunction $v_L(y) = \phi_1(Ly)$, $y\in[A_{L},B_{L}]=[a/L,b/L]$. In this flat regime, we have $B_L - A_L = N/L\to \gamma$ as $L \to \infty$. Being $I\subseteq [a, b]$, it holds that $A_L \le -\frac{1}{2}$ and $B_L \ge \frac{1}{2}$. Hence, up to extracting a subsequence, their limits $A_{L}\to A_\infty$ and $B_{L}\to B_\infty$ satisfy
\begin{equation*} \label{eq:finite_domain}
\frac{1}{2} - \gamma \le A_\infty \le -\frac{1}{2},\qquad
\frac{1}{2} \le B_\infty \le \gamma - \frac{1}{2}.\end{equation*}
On the other hand,
\begin{equation} \label{eq:rescaled_eq_fix}
    -v_L''(y) + W_L(y) v_L(y) = \hat{\mu}_L v_L(y), \quad y \in (A_L, B_L),
\end{equation}
where
\[
\hat{\mu}_L = L^2(\mu_1 - \pi^2),\qquad h_{L}(y):=h(Ly),\qquad W_L(y) = L^2 \pi^2 \left( \frac{1}{h_{L}^{2}(y)} - 1 \right).
\]
Equivalently, by rescaling the Rayleigh quotient for $\mu_1$, we see that $\hat{\mu}_L$ is the first eigenvalue of the operator $-\frac{d^2}{dy^2} + W_L$ on $(A_L, B_L)$, characterized by the variational principle:
\begin{equation*} \label{eq:rayleigh_rescaled}
    \hat{\mu}_L = \min_{\varphi \in H^1_0(A_L, B_L)} \frac{\ds\int_{A_L}^{B_L} |\varphi'(y)|^2 dy + \int_{A_L}^{B_L} W_L(y) \varphi^2(y) \, dy}{\ds\int_{A_L}^{B_L} \varphi^2(y) \, dy}.
\end{equation*}
The function $h_L(y)$ converges uniformly on any compact subinterval of $(A_{\infty},B_{\infty})$ to a limiting concave function $h_\infty(y)$ defined on $[A_\infty, B_\infty]$. 
Since $h_L(y) \ge 1 - 1/L^2$ on $[-1/2, 1/2]$, we uniformly obtain $h_\infty(y) \equiv 1$ on $[-1/2, 1/2]$.

Now, let us consider the eigenvalue equation \eqref{eq:rescaled_eq_fix} solved by $v_{L}$. Estimates \eqref{bound1dim} yield that $\hat{\mu}_L$ is uniformly bounded (in particular $\hat{\mu}_L \in (0, 3\pi^2+\eps)$ for $L$ large). Therefore, up to extracting a subsequence, $\hat{\mu}_L \to \hat{\mu}_\infty$. Being $W_L \ge 0$ and $v_L \le 1$, we deduce
\begin{equation*} \label{eq:weak_subsol}
    -v_L''(y) \le \hat{\mu}_L v_L(y) \le C, \quad \forall y \in (A_L, B_L).
\end{equation*}
Since $\phi_1$ is log-concave, $v_L$ is unimodal. By integrating from its maximum point and using the fact that the interval length $B_L - A_L \to \gamma$ is uniformly bounded, we deduce 
\[
|v_L'| \le C.
\] 
Thus, because $v_L$ continuously vanishes at $A_L$ and $B_L$, its zero-extension is uniformly Lipschitz continuous on $\mathbb{R}$. By the Ascoli-Arzelà theorem, $v_L$ converges, up to a subsequence and uniformly on $\mathbb{R}$, to a Lipschitz continuous function $v_\infty \in C^0(\mathbb{R})$.

Regarding the potential $W_L(y)$, notice that it is non-negative and convex since $h$ is positive and concave. Over the central interval $[-1/2, 1/2]$, we have $h_L(y) \ge 1 - 1/L^2$, which provides a uniform upper bound:
\begin{equation*}
    0 \le W_L(y) \le \pi^2 \frac{2 - 1/L^2}{(1 - 1/L^2)^2} \to 2\pi^2 \quad \forall y \in [-1/2, 1/2].
\end{equation*}
By standard properties of convex functions, the sequence $W_L$ converges, up to subsequences, pointwise to a convex function $W_\infty : (A_\infty, B_\infty) \to [0, +\infty]$. 
Define $J_{\textit{fin}} = \{ y \in (A_\infty, B_\infty) : W_\infty(y) < +\infty \}$. Due to convexity, $J_{\textit{fin}}$ is an interval. Since $W_\infty \le 2\pi^2$ on $[-1/2, 1/2]$, we have $[-1/2, 1/2] \subseteq J_{\textit{fin}}$. Let $(A^*, B^*)$ be the interior of $J_{\textit{fin}}$.

For any compact subset $K \subset (A^*, B^*)$, $W_L$ converges uniformly to $W_\infty$ and is uniformly bounded. 
Equation \eqref{eq:rescaled_eq_fix} then implies that $v_L''$ is uniformly bounded on $K$. 
Hence, $v_L$ converges in $C^1_{\text{loc}}(A^*, B^*)$ to $v_\infty$, and we can pass to the limit in \eqref{eq:rescaled_eq_fix} to obtain that $v_\infty \in C^1(A^*, B^*)$ is a solution to:
\begin{equation*}
    -v_\infty''(y) + W_\infty(y) v_\infty(y) = \hat{\mu}_\infty v_\infty(y) \quad \text{for } y \in (A^*, B^*).
\end{equation*}



Let us prove that $v_\infty(y) = 0$ for all $y \notin \overline{J_{\textit{fin}}}$. Multiplying the eigenvalue equation \eqref{eq:rescaled_eq_fix} by $v_L$ and integrating over $(A_L, B_L)$ yields:
\begin{equation} \label{eq:rayleigh}
    \int_{A_L}^{B_L} |v_L'(y)|^2 \, dy + \int_{A_L}^{B_L} W_L(y) |v_L(y)|^2 \, dy = \hat{\mu}_L \int_{A_L}^{B_L} |v_L(y)|^2 \, dy.
\end{equation}
Because $|v_L| \le 1$, the right-hand side is bounded by $\hat{\mu}_L (B_L - A_L)$. Since $\hat{\mu}_L$ is bounded and $B_L - A_L \to \gamma$, the left-hand side of \eqref{eq:rayleigh} is uniformly bounded. Hence by Fatou's Lemma,
\begin{equation*}
    \int_{A_\infty}^{B_\infty} W_\infty(y) |v_\infty(y)|^2 \, dy \le \liminf_{L \to \infty} \int_{A_L}^{B_L} W_L(y) |v_L(y)|^2 \, dy \le C.
\end{equation*}
 In order for the integral to be finite, we must necessarily have $v_\infty(y) = 0$ almost everywhere on $(A_\infty, B_\infty) \setminus J_{\textit{fin}}$. Being continuous, this immediately forces:
\begin{equation*}
    v_\infty(y) = 0 \quad \forall y \notin \overline{J_{\textit{fin}}}.
\end{equation*}

By continuity, this also enforces the Dirichlet boundary conditions $v_\infty(A^*) = v_\infty(B^*) = 0$ at the endpoints $A^{*},B^{*}$ of $J_{\textit{fin}}$.

It remains to bound the mass of $v_\infty$ inside $J_{\textit{fin}}$. Passing the rescaled eigenvalue equation to the limit, $v_\infty$ operates exactly as the first (positive, normalized) Dirichlet eigenfunction of the limit Schrödinger operator on the strict interior of $J_{\textit{fin}}$ (the fact that $v_\infty \ge 0$ does not change sign guarantees that $\hat{\mu}_\infty$ is indeed the principal eigenvalue):
\begin{equation*}
    \begin{cases}
    -v_\infty''(y) + W_\infty(y) v_\infty(y) = \hat{\mu}_\infty v_\infty(y) &\text{in } (A^*, B^*),\\[.3cm]
v_\infty(A^*) = v_\infty(B^*) = 0,\quad \max_{J_{\textit{fin}}} v_\infty = 1.
\end{cases}
\end{equation*}

Specifically, since at the maximum point $y_{L}$ of $v_L$ we have $v_L''(y_L) \le 0$, equation \eqref{eq:rescaled_eq_fix} gives $W_L(y_L) \le \hat{\mu}_L \le C$. This implies that any limit point of $y_L$ as $L \to \infty$ must belong to $\overline{J_{\textit{fin}}}$, hence the $L^\infty$ normalization is preserved in the limit and $\max_{J_{\textit{fin}}} v_\infty = 1$. 
Moreover, for any $y \in J_{\textit{fin}}$ we have $L^2(h_L(y)^{-2} - 1) = \pi^{-2}W_L(y) \to \pi^{-2} W_\infty(y) < \infty$, which definitively forces $h_L(y) \to 1$, meaning precisely that $h_\infty(y) = 1$ on $J_{\textit{fin}}$.

By Proposition \ref{prop:integral_bound}, its total mass satisfies:
\begin{equation*}
    \int_{A_\infty}^{B_\infty} v_\infty(y) \, dy = \int_{A^*}^{B^*} v_\infty(y) \, dy \le \frac{2}{\pi} (B^* - A^*). 
\end{equation*}
To conclude the proof, since $h_\infty(y) = 1$ on $J_{\textit{fin}}$,
\begin{equation*}
    \liminf_{L \to \infty} \frac{|\Omega_L|}{L} \ge \int_{A_\infty}^{B_\infty} h_\infty(y) \, dy \ge \int_{A^*}^{B^*} h_\infty(y) \, dy = B^* - A^*.
\end{equation*}
Finally, 
\begin{equation*}
    \limsup_{L \to \infty} \frac{1}{|\Omega_L|} \int_a^b \phi_1(x) \,dx 
    = \limsup_{L \to \infty} \frac{\ds\int_{A_L}^{B_L} v_L(y) \,dy}{\ds\int_{A_L}^{B_L} h_L(y) \,dy} 
    \le \frac{\ds\int_{A^*}^{B^*} v_\infty(y) \,dy}{\ds\int_{A^*}^{B^*} h_\infty(y) \,dy} \le\frac{2}{\pi}.
\end{equation*}
concluding the proof.
\end{proof}

\begin{remark}
In the analysis of case 1 of the above proof, we obtain classes of convex sets where the efficiency vanishes, at the limit; more precisely, these are the sets where the relevant length scale $L$ is $L=o(|\Omega|)$. Such result confirms, as already shown in \cite{vdBDPDBG}, that for elongating ellipses, or triangular domains, the efficiency vanishes.
\end{remark}

\subsection{Proof of Theorem \ref{thm:limeff}}
Relying on the Grieser-Jerison uncoupling theorem and Proposition \ref{prop:limit-uni}, we are now ready to prove our last two main results.
We retain the geometric normalization framework introduced at the beginning of Section 3.
\begin{proof}[Proof of Theorem \ref{thm:limeff}]
To simplify notation, we formally drop the sequence index and denote $\Omega_{n}$ simply as $\Omega$. Due to the scale invariance of $E(\Omega)$, there is no loss of generality in assuming $\Omega$ is normalized. Let $\sigma \in (0,1)$. We decompose the $L^1$ norm of $u$ based on its sublevel and superlevel sets:
\begin{equation}
\label{split}
    \int_\Omega u(x,y) \,dxdy = \int_{\Omega \cap \{u \le \sigma\}} u(x,y) \,dxdy + \int_{\Omega \cap \{u > \sigma\}} u(x,y) \,dxdy.
\end{equation}
For the first integral, we trivially have:
\begin{equation*} \label{eq:int_sigma}
    \int_{\Omega \cap \{u \le \sigma\}} u(x,y) \,dxdy \le \sigma |\Omega|.
\end{equation*}
For the second integral, let us consider $J_\sigma = \{x \in [a, b] : \max_y u(x,y) > \sigma\}$. Clearly, $\{u > \sigma\}$ is contained in $J_\sigma \times [0,1]$, thus
\begin{equation*} \label{eq:int_Jsigma}
    \int_{\Omega \cap \{u > \sigma\}} u(x,y) \,dxdy \le \int_{J_\sigma} \int_{f_1(x)}^{f_2(x)} u(x,y) \,dy \,dx.
\end{equation*}
As remarked in \cite[Section 2]{grieser_jerison}, by suitably adjusting the length fraction defining the central interval $I'$ and its structural constant, the Grieser-Jerison estimate can be naturally extended to any longitudinal segment where $\max_y u(x,y)$ admits a strictly positive lower bound. Thus, there exists a constant $C_\sigma > 0$ (depending only on $\sigma$) such that for all $x \in J_\sigma$:
\begin{equation*}
    \left| u(x,y) - \phi_1(x) \sin\left(\pi \frac{y-f_1(x)}{h(x)}\right) \right| \le \frac{C_\sigma}{L}.
\end{equation*}
Integrating we obtain:
\begin{align*}
    \int_{f_1(x)}^{f_2(x)} u(x,y) \,dy &\le \phi_1(x) \int_{f_1(x)}^{f_2(x)} \sin\left(\pi \frac{y-f_1(x)}{h(x)}\right) \,dy + \frac{C_\sigma}{L} h(x) \\
    &= \frac{2}{\pi} h(x) \phi_1(x) + \frac{C_\sigma}{L} h(x).
\end{align*}
Integrating this upper bound over the domain $J_\sigma$ yields:
\begin{equation*} \label{eq:int_Jsigma_bound}
    \int_{J_\sigma} \int_{f_1(x)}^{f_2(x)} u(x,y) \,dy \,dx \le \frac{2}{\pi} \int_{J_\sigma} \phi_1(x) \,dx + \frac{C_\sigma}{L} |J_\sigma|.
\end{equation*}
Given that $|J_\sigma| \le N \le 2|\Omega|$, we can insert these integral estimates back into \eqref{split} to find:
\begin{equation*}
    \|u\|_{L^1(\Omega)} \le \sigma |\Omega| + \frac{2}{\pi} \int_a^b \phi_1(x) \,dx + \frac{2C_\sigma}{L}|\Omega|.
\end{equation*}
Dividing by the area $|\Omega|$, the efficiency functional obeys:
\begin{equation*}
    E(\Omega) \le \sigma + \frac{2}{\pi |\Omega|} \int_a^b \phi_1(x) \,dx + \frac{2C_\sigma}{L}.
\end{equation*}
As $L \to \infty$, the trailing term naturally vanishes. Thanks to Proposition \ref{prop:limit-uni}, taking the superior limit yields:
\begin{equation*}
    \limsup_{L \to \infty} E(\Omega) \le \sigma + \frac{4}{\pi^2}.
\end{equation*}
Since $\sigma \in (0,1)$ was arbitrary, the thesis holds. 
\end{proof}

\subsection{Proof of Theorem \ref{thm:paynestakgoldmax}}

\begin{proof}[Proof of Theorem \ref{thm:paynestakgoldmax}]
An explicit computation on the unit open disk $B$ gives:
    \begin{equation*}
        E(B) = \frac{1}{|B|} \int_B J_0(j_{0,1} |x|) \,dx = 2 \int_0^1 r J_0(j_{0,1} r) \,dr = \frac{2}{j_{0,1}} J_1(j_{0,1}) \approx 0.4316.
   \end{equation*}
    Now, consider a maximizing sequence of domains $\{\Omega_n\} \subset \mathcal{C}$, where $\mathcal{C}$ denotes the broad class of all planar convex open sets. By definition:
        \begin{equation}
        \label{maxim}
        \lim_{n \to \infty} E(\Omega_n) = \sup_{\Omega \in \mathcal{C}} E(\Omega).
    \end{equation}
    We can assume the sequence elements $\Omega_n$ are normalized. We assert that the ratio $D_{n}/R_{n}$ must be uniformly bounded. Indeed, if this were false, up to extracting a subsequence we would have $D_{n} \to \infty$ (since $R_{n}$ is strictly bounded from above and below for normalized coordinates). In such an elongating scenario, Theorem \ref{thm:limeff} would dictate:
      \begin{equation*}
        \lim_{n \to \infty} E(\Omega_n) \le \frac{4}{\pi^2} \approx 0,4053 < E(B),
    \end{equation*}
which strictly contradicts the maximization limit \eqref{maxim}.

By virtue of the Blaschke Selection Theorem, the uniformly bounded sequence $\Omega_{n}$ converges (up to a subsequence) in the Hausdorff metric to a limit body $\Omega_*$. Since the domains' radii satisfy $c^{-1}\le R_{n} < D_{n} \le c$, the limit configuration $\Omega_*$ is non-degenerate and constitutes an admissible, bounded, planar convex open set. 
Consequently,
    \begin{equation*}
        E(\Omega_*) = \lim_{n \to \infty} E(\Omega_n) = \sup_{\Omega \in \mathcal{C}} E(\Omega).
    \end{equation*}
 The limit body $\Omega_*$ realizes the maximum for the efficiency functional, and the proof is completed.
\end{proof}
\section*{Acknowledgements}
This work has been partially supported by GNAMPA-INdAM.

\end{document}